\newcommand{\comment}[1]{}

\documentclass[letterpaper,11pt]{article}

\usepackage{amsthm,amsfonts, amssymb, amsmath}
\usepackage[round,authoryear]{natbib}
\usepackage{stmaryrd,url}

\newtheorem{Thm}{Theorem}[section]
\newtheorem{Lem}[Thm]{Lemma}
\newtheorem{Prop}[Thm]{Proposition}
\newtheorem{Coro}[Thm]{Corollary}

\theoremstyle{definition}
\newtheorem{Rem}[Thm]{Remark}
\newtheorem{Def}[Thm]{Definition}
\newtheorem{Example}[Thm]{Example}

\numberwithin{equation}{section}

\newcommand{\ind}{{\bf 1}}
\newcommand{\proba}{\mathbb P}
\newcommand{\esp}{{\mathbb E}}

\newcommand{\defe}{\mathrel{\mathop:}=}

\newcommand{\inv}{^{-1}}

\newcommand{\calA}{{\cal A}}
\newcommand{\calB}{{\cal B}}

\newcommand{\filF}{{\cal F}}
\newcommand{\calG}{{\cal G}}
\newcommand{\calH}{{\cal H}}

\newcommand{\calL}{{\cal L}}

\newcommand{\calN}{{\cal N}}


\def\indn#1{\{#1_n\}_{n\in \mathbb N}}

\def\indz#1{\{#1_k\}_{k\in \mathbb Z}}

\def\indzz#1{\{#1_{i,j}\}_{(i,j)\in\mathbb Z^2}}








\newcommand{\eqnh}{\begin{eqnarray*}}
\newcommand{\eqne}{\end{eqnarray*}}
\newcommand{\eqnhn}{\begin{eqnarray}}
\newcommand{\eqnen}{\end{eqnarray}}
\newcommand{\equh}{\begin{equation}}
\newcommand{\eque}{\end{equation}}


\newcommand{\sumin}{\sum_{i=1}^n}

\def\summ#1#2#3{\sum_{#1 = #2}^{#3}}

\def\sif#1#2{\sum_{#1=#2}^\infty}
\def\sumZ#1{\sum_{#1\in\mathbb Z}}

\def\topp#1{^{(#1)}}

\def\malmosts{\mbox{ almost surely}}




\def\nn#1{{\left\|#1\right\|}}

\def\snn#1{\|#1\|}

\def\bnn#1{\Big\|#1\Big\|}

\def\sabs#1{|#1|}

\def\sccbb#1{\{#1\}}
\def\bpp#1{\Big(#1\Big)}
\def\spp#1{(#1)}

\def\bb#1{\left[#1\right]}
\def\sbb#1{[#1]}
\def\bbb#1{\Big[#1\Big]}
\def\ip#1{\left\langle#1\right\rangle}

\def\vv#1{{\bf #1}}

\def\d{{\rm d}}


\def\eqnhspace{& & \ \ \ \ }

\def\calF{{\mathcal F}}


\def\qmand{\quad\mbox{ and }\quad}
\def\mor{\mbox{ or }}

\def\mwith{\mbox{ with }}
\def\qmwith{\quad\mbox{ with }\quad}
\def\mfa{\mbox{ for all }}

\def\qmif{\quad\mbox{ if }\quad}



\def\adaptF#1{\{#1_t,\filF_t:0\leq t<\infty\}}


\def\weakto{\Rightarrow}


\def\limn{\lim_{n\to\infty}}
\def\limm{\lim_{m\to\infty}}






\def\wt#1{\widetilde{#1}}

\def\ifhead#1#2{\left\{\begin{array}{#1@{\quad\mbox{ if }\quad}#2}}
\def\ifend{\end{array}\right.}


\def\indkd#1{\{#1_k\}_{k\in\mathbb Z^d}}
\def\Zd{{\mathbb Z^d}}
\def\Nd{{\mathbb N^d}}
\def\N{\mathbb N}
\def\OAP{(\Omega,\calA,\proba)}
\def\indij#1{\{#1_{i,j}\}_{(i,j)\in\mathbb Z^2}}
\def\indijn#1{\{#1_{i,j}\}_{(i,j)\in\mathbb N^2}}

\def\B{\mathbb B}
\def\mfa{\mbox{ for all }}

\title{A New Condition for the Invariance Principle for Stationary Random Fields}
\author{Yizao Wang\thanks{The author was partially supported by the NSF grant DMS--0806094 at the University of Michigan.}~ and Michael Woodroofe\\Department of Statistics, University of Michigan}


\begin{document}\sloppy
\maketitle

\begin{abstract}
We establish a central limit theorem and an invariance principle for stationary random fields, with projective-type conditions. Our result is obtained via an $m$-dependent approximation method. As applications, we establish invariance principles for orthomartingales and functionals of linear random fields.\medskip

\noindent{\it AMS 2000 subject classifications:} Primary 60F17; secondary 60J48

\noindent{\it Keywords:} central limit theorem, invariance principle, linear random field, $m$-dependent approximation, orthomartingale
\end{abstract}

\section{Introduction}
In 1910, \cite{markov1910recherches} proved a central limit theorem for a two-state Markov chain. This initiated one of the longest histories in probability theory, the central limit theorem for stationary processes. One successful approach is the {\it martingale approximation} method, first applied by  \cite{gordin69central} and then developed by many other researchers. Along this line,  \cite{maxwell00central} proved the following result. 
Let $\{X_k\}_{k\in\mathbb Z}$ be a stationary process with $X_k = f\circ T^k$ for all $k\in\mathbb Z$, where $f$ is a measurable function from a probability space $(\Omega,\calA,\proba)$ to $\mathbb R$, and $T$ is a bimeasurable, measure-preserving, one-to-one and onto map on $(\Omega,\calA,\proba)$. Consider 
\equh\label{eq:Snf1}
S_n(f) = \summ k1n f\circ T^k.
\eque
Let $\{\filF_k\}_{k\in\mathbb Z}$ be a filtration on $(\Omega,\calA,\proba)$ such that $T\inv \filF_k = \filF_{k+1}$ for all $k\in\mathbb Z$. Suppose $\int f^2\d\proba <\infty$, $\int f \d\proba = 0$, $f\in\filF_0$ (i.e., the sequence is {\it adapted}) and $\esp(f\mid \bigcap_{k\in\mathbb Z}\filF_k) = 0$. Maxwell and Woodroofe proved that, if
\equh\label{cond:MW00}
\sif k1\frac{\snn{\esp(S_k(f)\mid\filF_0)}_2}{k^{3/2}}<\infty\,,
\eque
then $\sigma^2 = \limn \esp(S_n^2)/n$ exists, and
\[
\frac{S_n(f)}{\sqrt n}\weakto\calN(0,\sigma^2)\,.
\]
Here `$\weakto$' denotes the weak convergence of the random variables (convergence in distribution), and the $L^2$ norm $\nn\cdot_{2}$ is with respect to the measure $\proba$.
Note that~\eqref{cond:MW00} is implied by
\equh\label{cond:MW00'}
\sif k1\frac{\snn{\esp(f\circ T^k\mid\filF_0)}_2}{k^{1/2}}<\infty\,.
\eque

Condition~\eqref{cond:MW00} is referred to as the Maxwell--Woodroofe condition. 
Later on,  \cite{peligrad05new} showed that~\eqref{cond:MW00} also implies the invariance principle. Indeed, let $\{\mathbb B(t)\}_{t\in[0,1]}$ denote the standard Brownian motion.
Then,~\eqref{cond:MW00} implies
\[
\frac{S_{\left\lfloor n\cdot\right\rfloor}(f)}{\sqrt n}\weakto\sigma \mathbb B(\cdot)
\]
where $\left\lfloor x\right\rfloor$ denotes the largest integer smaller or equal to $x\in\mathbb R$ and `$\weakto$' is understood as the weak convergence in $C[0,1]$.
Furthermore, Peligrad and Utev showed that~\eqref{cond:MW00} is the best possible (among conditions that only restrict the size of $\nn{\esp( S_n(f)\mid\filF_0)}_2$). See also   \cite{dedecker07weak} and  \cite{durieu08comparison} for comparisons of Conditions~\eqref{cond:MW00} and~\eqref{cond:MW00'} with other sufficient conditions for central limit theorems.  
For non-adapted sequences (i.e., $f\notin\filF_0$), a similar condition guaranteeing the invariance principle is established by   \cite{volny07nonadapted}. 
Other important references on central limit theorems by martingale approximation include 
 \cite{gordin78central},  \cite{kipnis86central},   \cite{woodroofe92central},  \cite{wu04martingale},  \cite{dedecker07weak},  \cite{peligrad07maximal}, among others, and  \cite{merlevede06recent} for a survey. 
The martingale approximation can also be applied to establish invariance principle for empirical processes, see for example   \cite{wu03empirical,wu08empirical}, and for random walks in random environment, see for example   \cite{rassoulagha05almost, rassoulagha07quenched}.

In this paper, we establish a central limit theorem and an invariance principle for stationary multiparameter random fields. We briefly mention a few results in the literature.   \cite{bolthausen82central},  \cite{goldie86central} and  \cite{bradley89caution} studied this problem under suitable mixing conditions.   \cite{basu79functional},  \cite{nahapetian95billingsley}, and  \cite{poghosyan98invariance} considered the problem for {\it multiparameter martingales}. 
Another important result is due to  \cite{dedecker98central,dedecker01exponential}, whose approach was based on an adaptation of the {\it Lindeberg method}. As a particular case,  \cite{cheng06central}  established a central limit theorem for {\it functionals of linear random fields}, based on a lexicographically ordered martingale approximation.  

Here, we aim at establishing the so-called {\it projective-type} conditions such that the central limit theorem and invariance principle hold. Such conditions, often involving conditional expectations as in~\eqref{cond:MW00} and~\eqref{cond:MW00'}, have recently drawn much attentions in central limit theorems for stationary sequences (see e.g.~\cite{dedecker07weak}). In particular, such conditions are easy to verify when applying such results to stochastic processes from statistics and econometrics (see e.g.~\cite{wu11asymptotic}). However, central limit theorems for stationary random fields based on projective conditions have been much less explored. 

This problem is not a simple extension of a one-dimensional problem to a high-dimensional one. An important reason is that, the main technique for establishing central limit theorems with projective conditions in one dimension, the {\it martingale approximation} approach, does not apply to (high-dimensional) random fields as successfully as to (one-dimensional) stochastic processes. This obstacle has been known among researchers for more than 30 years. For example,  \cite{bolthausen82central} remarked that `Gordin uses an approximation by martingales, but his method appears difficult to generalizes to dimensions $\geq 2$.'

Our result, with a condition similar to~\eqref{cond:MW00'}, is a first attempt of extending central limit theorems with projective-type conditions to the multiparameter stationary random fields. The result is obtained by a different approximation approach, namely, approximation by $m$-dependent random fields.

To state our main result, we start with some notations. We consider a {\it product probability space} $\OAP$, i.e., a $\Zd$-indexed-product of i.i.d.~probability spaces in form of
\[
\OAP \equiv (\mathbb R^\Zd,\calB^\Zd,P^\Zd)\,.
\]
Write $\epsilon_k(\omega) = \omega_k$, for all $\omega\in\mathbb R^\Zd$ and $k\in\Zd$. Then, $\indkd\epsilon$ are i.i.d.~random variables with distribution $P$. 
On such a space, we define the {\it natural filtration} $\indkd\filF$ by
\equh\label{eq:filFk}
\filF_k \defe \sigma\{\epsilon_l: l\preceq k, l\in\Zd\}, \mfa k\in\Zd\,.
\eque
Here and in the sequel, for all vector $x\in\mathbb R^d$, we write $x = (x_1,\dots,x_d)$ and for all $l,k\in\mathbb R^d$, let $l\preceq k$ stand for $l_i\leq k_i, i = 1,\dots,d$.

We focus on mean-zero stationary random fields, defined on a product probability space.
Let $\indkd T$ denote the group of shift operators on $\mathbb R^\Zd$ with $(T_k\omega)_l = \omega_{k+l}$, for all $k,l\in\Zd\,, \omega\in\mathbb R^{\mathbb Z^d}$. Then, we consider random fields in form of 
\[
\indkd {f\circ T}\,, \mbox{ or equivalently } \{f(\epsilon_{k+l}:l\in\mathbb Z^d)\}_{k\in\mathbb Z^d}\,,
\]
where 
$f$ is in the class $\calL_0^p = \{f\in L^p(\filF_\infty), \int f\d\proba = 0\}, p\geq 2$, with $\filF_{\infty} = \bigvee_{k\in\mathbb Z^d}\filF_k$. 

Throughout this paper, we consider a sequence $\indn V$ of finite rectangular subsets of $\Zd$, in form of
\equh\label{eq:Vn}
V_n = \prod_{i=1}^d\{1,\dots,m_i\topp n\}\subset\mathbb N^d\,, \mfa n\in\mathbb N\,,
\eque
with $m_i\topp n$ increasing to infinity as $n\to\infty$ for all $i = 1,\dots,d$.
Let 
\equh\label{eq:Snf}
S_n(f)\equiv S(V_n,f) = \sum_{k\in V_n}f\circ T_k
\eque
denote the partial sums with respect to $V_n$. 
Moreover, write for $t\in[0,1]$, $V_n(t) = \prod_{i=1}^d[0,m_i\topp nt]\subset \mathbb R^d$ and $R_k = \prod_{i=1}^d(k_i-1,k_i]\subset\mathbb R^d$ for all $k\in\Zd$. 

We write also
\equh\label{eq:Bntf}
B_{n,t}(f)\equiv B_{V_n,t}(f) = \sum_{{k\in\Nd}}\lambda(V_n(t)\cap R_k)f\circ T_k\,,
\eque
where $\lambda$ is the Lebesgue measure on $\mathbb R^d$, and consider the weak convergence in the space $C[0,1]^d$, the space of continuous functions on $[0,1]^d$, equipped with the uniform metric. Recall that the standard $d$-parameter Brownian sheet on $[0,1]^d$, denoted by $\{\B(t)\}_{t\in[0,1]^d}$, is a mean-zero Gaussian random field with covariance $\esp(\B(s)\B(t)) = \prod_{i=1}^d\min(s_i,t_i), s,t\in[0,1]^d$. Write $\vv 0 = (0,\dots,0), \vv 1 = (1,\dots,1)\in\Zd$. In parallel to~\eqref{cond:MW00'}, our projective-type condition involves the following term:
\equh\label{cond:wtDeltad}
\wt\Delta_{d,p}(f) 
\defe \sum_{k\in\mathbb N^d} \frac{\snn{\esp(f\circ T_{k}\mid\filF_{\vv 1})}_p}{\prod_{i=1}^dk_i^{1/2}}\,.
\eque
Our main result is the following. 
\begin{Thm}\label{thm:1}
Consider a product probability space described above. If
 $f\in\calL_0^2$, $f\in\filF_{\vv 0}$ and
$\wt\Delta_{d,2}(f)<\infty$,
then 
\[
\sigma^2 = \limn \frac{\esp(S_n(f)^2)}{|V_n|} <\infty
\]
exists and 
\[
\frac{S_n(f)}{|V_n|^{1/2}} \weakto\calN(0,\sigma^2)\,.
\] 
In addition, if $f\in\calL_0^p$ and $\wt\Delta_{d,p}(f)<\infty$ for some $p>2$, then
\equh\label{eq:IP}
\frac{B_{n,\cdot}(f)}{|V_n|^{1/2}}\weakto \sigma\mathbb B(\cdot)
\eque
in $C[0,1]^d$.
\end{Thm}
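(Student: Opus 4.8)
The plan is to prove both the central limit theorem and the invariance principle by the $m$-dependent approximation announced in the introduction, combined with the standard approximation theorem for weak convergence (Billingsley, Theorem 3.2). Since $f\in\filF_{\vv 0}$ depends only on the coordinates $\{\epsilon_l:l\preceq\vv 0\}$, I would first introduce, for each integer $m\ge 1$, the truncation
\[
f_m\defe\esp\bigl(f\mid\sigma\{\epsilon_l: -(m-1)\vv 1\preceq l\preceq\vv 0\}\bigr),
\]
which depends on finitely many coordinates lying in a box of side $m$. Consequently $\{f_m\circ T_k\}_{k\in\Zd}$ is a stationary $m$-dependent random field, and $f_m\in\calL_0^p$ with $f_m\to f$ in $L^p$. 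The three inputs needed for the approximation theorem are then: (i) for each fixed $m$, weak convergence of the normalized sums of $f_m$; (ii) convergence of the corresponding limits as $m\to\infty$; and (iii) uniform (in $n$) smallness of the approximation error as $m\to\infty$.

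For (i), I would establish the CLT and invariance principle for the $m$-dependent field $\{f_m\circ T_k\}$. Finite-dimensional convergence follows from a blocking argument: partition $V_n$ into large sub-rectangles separated by corridors of width $m$; the contributions of distinct blocks are independent by $m$-dependence, the corridors are asymptotically negligible in $L^2$, and a Lindeberg/Lyapunov CLT for the (independent) block sums yields asymptotic normality with limiting variance
\[
\sigma_m^2=\sum_{k\in\Zd}\esp\bigl(f_m\cdot(f_m\circ T_k)\bigr),
\]
a finite sum. Joint convergence over several values of $t$, hence finite-dimensional convergence of $B_{n,\cdot}(f_m)/|V_n|^{1/2}$ to $\sigma_m\B(\cdot)$, follows via Cram\'er--Wold applied to the block decomposition. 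Tightness in $C[0,1]^d$ I would obtain from a moment bound on rectangular increments of the smoothed sums; here the hypothesis $p>2$ enters, giving increments controlled by a power of the Lebesgue measure of the corresponding rectangle, which suffices for tightness of multiparameter continuous processes.

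For (ii) and (iii) the central quantity is a variance bound of Maxwell--Woodroofe type, and I expect this to be the main obstacle. Writing $g=f-f_m\in\calL_0^2$, I would prove an inequality of the form
\[
\limsup_{n\to\infty}\frac{\esp\bigl(S_n(g)^2\bigr)}{|V_n|}\le C\,\bigl(\wt\Delta_{d,2}(g)\bigr)^2,
\]
valid for every adapted $g$, with $C$ depending only on $d$. The difficulty is that the naive covariance expansion only produces the too-strong summability $\sum_l\snn{\esp(g\circ T_l\mid\filF_{\vv 0})}_2<\infty$; to exploit the weaker weighting $\prod_{i=1}^d k_i^{1/2}$ in $\wt\Delta_{d,2}$ one must run a $d$-dimensional dyadic blocking recursion, splitting each coordinate direction at successive scales $2^r$ and summing the resulting one-step conditional-expectation increments, which is the genuinely multiparameter analogue of the one-dimensional Maxwell--Woodroofe estimate. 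Granting this bound, the $L^2$ approximation error $\esp\bigl(S_n(f-f_m)^2\bigr)/|V_n|$ is controlled by $\wt\Delta_{d,2}(f-f_m)^2$, which I would show tends to $0$ as $m\to\infty$ by dominated convergence (each summand tends to $0$ and is controlled along the sequence); applying the same estimate to $f_m-f_{m'}$ shows $(\sigma_m^2)_m$ is Cauchy, so $\sigma_m^2\to\sigma^2$ and $\sigma^2=\lim_n\esp(S_n(f)^2)/|V_n|$ exists, as asserted.

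Finally, for the invariance principle I would upgrade the $L^2$ control to a uniform-in-$t$ statement, bounding $\esp\bigl(\sup_{t\in[0,1]^d}|B_{n,t}(f-f_m)|^2\bigr)/|V_n|$. This requires a maximal inequality over the rectangles $V_n(t)$, the $d$-parameter counterpart of the Peligrad--Utev maximal bound, which I would derive by combining the variance estimate above with a dyadic maximization across scales in each coordinate. With the three ingredients in hand, the approximation theorem for weak convergence delivers $S_n(f)/|V_n|^{1/2}\weakto\calN(0,\sigma^2)$ and, under the $p>2$ hypothesis, $B_{n,\cdot}(f)/|V_n|^{1/2}\weakto\sigma\B(\cdot)$ in $C[0,1]^d$.
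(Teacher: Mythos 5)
Your overall architecture is the same as the paper's: truncate to $f_m=\esp(f\mid\sigma\{\epsilon_l:l\in\mbox{box}\})$, use the CLT for $m$-dependent fields, show $\sigma_m$ is Cauchy, and kill the error via a Maxwell--Woodroofe-type bound plus dominated convergence. But the two load-bearing steps are deferred, and in both the missing idea is the same one: the \emph{commuting} property of the marginal filtrations on a product space. The paper does not run a $d$-dimensional dyadic recursion from scratch. It proves the stronger $L^p$ moment inequality $\snn{S_{m,n}(f)}_p\le Cm^{1/2}n^{1/2}\sum_{k\le m}\sum_{l\le n}k^{-3/2}l^{-3/2}\snn{\esp(S_{k,l}(f)\mid\filF_{1,1})}_p$ (its Proposition 6.1 and Corollary 6.2) by writing $S_{m,n}=\sum_{i\le m}\wt S_{0,n}\circ T_{i,0}$ and applying the known \emph{one-dimensional} Peligrad--Utev inequality twice, once in each coordinate, with respect to the marginal filtrations $\filF_{i,\infty}$ and $\filF_{\infty,j}$; the iterated conditional expectations collapse because $\esp[\esp(\cdot\mid\filF_{1,\infty})\mid\filF_{\infty,1}]=\esp(\cdot\mid\filF_{1,1})$, which holds precisely because the innovations are independent (Proposition 7.1). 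Any direct multiparameter recursion of the kind you sketch must reproduce exactly this interaction between coordinate directions, and your sketch never identifies it; the same exchange of conditional expectations is also what justifies your unproved domination step, namely $\snn{\esp[(f-f_m)\circ T_k\mid\filF_{\vv 1}]}_2\le 2\snn{\esp(f\circ T_k\mid\filF_{\vv 1})}_2$, without which the dominated-convergence argument has no dominating sequence. That this structural input cannot be waved away is shown by the paper's Example 5.5: an orthomartingale-difference field with respect to a commuting filtration, satisfying the projective condition, for which the CLT fails because no $m$-dependent approximation exists.

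The tightness step of your invariance principle is a genuine gap, not just a deferral. You propose to bound $\esp\bigl(\sup_{t\in[0,1]^d}|B_{n,t}(f-f_m)|^2\bigr)/|V_n|$ by a $d$-parameter $L^2$ maximal inequality. First, the $L^2$ increment bounds available under $\wt\Delta_{d,2}(f-f_m)<\infty$ are of order $\lambda(R)$ per rectangle $R$, which is the critical exponent in dimension $d\ge 2$ and cannot be chained into a sup bound; no Peligrad--Utev-type maximal inequality over rectangles is known in this setting. Second, and decisively: if such an $L^2$ maximal bound held, your scheme (combined with Shashkin's $L^2$ invariance principle for $m$-dependent fields, which the paper cites) would yield the invariance principle assuming only $\wt\Delta_{d,2}(f)<\infty$, i.e., with $p=2$ --- which the paper explicitly records as an open problem (Remark 4.2). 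The paper's proof avoids maximal inequalities entirely: it establishes tightness of $B_{n,\cdot}(f)$ itself by applying the $L^p$ ($p>2$) moment inequality to rectangular increments, obtaining $\snn{B_n((r_1,s_1]\times(r_2,s_2])}_p\le C|V_n|^{1/2}[(s_1-r_1)(s_2-r_2)]^{1/2}\wt\Delta_{2,p}(f)$, and then invokes Nagai's moment criterion for tightness in $C[0,1]^2$; this is exactly where $p>2$ is used. Your finite-dimensional step is fine as stated (pointwise in $t$, the error is controlled by the $L^2$ bound, as in the paper), so the repair is to drop the maximal inequality and argue tightness through increments of $B_n(f)$ directly, using the $L^p$ version of the moment inequality and $\wt\Delta_{d,p}(f)<\infty$.
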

For the sake of simplicity, we will prove Theorem~\ref{thm:1} in the case $d = 2$ in Sections~\ref{sec:CLT} and~\ref{sec:IP}. 
 
We develop two applications of the main result. First, we obtain a central limit theorem for {\it orthomartingales}, a special class of multiparameter martingale (see e.g.~\cite{khoshnevisan02multiparameter}), defined on a product probability space. To the best of our knowledge, this result is more general than existing central limit theorems for multiparameter martingales (\cite{basu79functional},  \cite{nahapetian95billingsley} and   \cite{poghosyan98invariance}), on which we provide a detailed discussion in Section~\ref{sec:orthomartingales}. In particular, we demonstrate that one should not expect a central limit theorem even for general orthomartingales, without extra conditions on the structure of the underlying probability space. 

Second, we obtain an invariance principle of functionals of stationary causal linear random fields in Section~\ref{sec:LRF}. This result extends the work of  \cite{wu02central} in the one-dimensional case. Another central limit theorem for functional of stationary linear random fields has recently been developed by  \cite{cheng06central}, following the approach of   \cite{ho97limit} and  \cite{cheng05asymptotic} in the one-dimensional case.  We provide simple examples where our condition is weaker.

\begin{Rem}
After we finished this work,  \cite{elmachkouri11central} obtained a central limit theorem and an invariance principle for stationary random fields, in the similar spirit as ours. They took also an $m$-approximation approach, based on the {\it physical dependence measure} introduced by  \cite{wu05nonlinear}.
Their results are more general, in the sense that they established invariance principle for random fields indexed by arbitrary sets instead of rectangle ones. Their conditions are not directly comparable to ours. However, in the application to functionals of linear random fields, their condition on the coefficients is weaker (see Remark~\ref{rem:EVW}). 
\end{Rem}
The paper is organized as follows. In Section~\ref{sec:prelim} we provide preliminary results on $m$-dependent approximation. We establish the central limit theorem in Section~\ref{sec:CLT} and then the invariance principle in Section~\ref{sec:IP}. Sections~\ref{sec:orthomartingales} and~\ref{sec:LRF} are devoted to the applications to orthomartingales and functionals of stationary linear random fields, respectively. In Section~\ref{sec:momentInequality}, we prove a moment inequality, which plays a crucial role in proving our limit results. Some other auxiliary proofs are given in Section~\ref{sec:proofs}.

\section{$m$-Dependent Approximation}\label{sec:prelim}
We describe the general procedure of $m$-dependent approximation in this section. In this section, we do not assume any structure on the underlying probability space, nor the filtration structure. Instead, we simply assume $f\in L^2_0 = \{f\in L^2\OAP, \int f\d\proba = 0\}$, and $\indkd T$ is an Abelian group of bimeasurable, measure-preserving, one-to-one and onto maps on $\OAP$. 

The notion of $m$-dependence was  introduced by  \cite{hoeffding48central}. We say a random variable $f$ is {\it $m$-dependent}, if $f\circ T_k, f\circ T_l$ are independent whenever $|k-l|_\infty \defe\max_{i= 1,\dots,d}|k_i-l_i|>m$. 
The following result on the asymptotic normality of sums of $m$-dependent random variables is a consequence of  \cite{bolthausen82central} (see also  \cite{rosen69note}). Recall $\indn V$ given in~\eqref{eq:Vn}.

\begin{Thm}\label{thm:rosen}
Suppose $f_m\in L^2_0$ is $m$-dependent and. Write
\equh\label{eq:sigmamVn}
\sigma^2_m = \sum_{k\in\Zd}\esp[f_m(f_m\circ T_k)]\,.
\eque
Then,
\[
\frac{S_n(f_m)}{|V_n|^{1/2}} \weakto \calN(0,\sigma_m^2)\,.
\]
\end{Thm}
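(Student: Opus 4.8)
The plan is to prove this standard $m$-dependent central limit theorem by Bernstein's big-block/small-block decomposition, reducing to the Lindeberg--Feller theorem for a triangular array of independent summands, after a preliminary truncation that compensates for the fact that only second moments are assumed. The key structural fact used throughout is that $f_m$ has range $m$, so $\esp[(f_m\circ T_k)(f_m\circ T_l)]=0$ whenever $|k-l|_\infty>m$. I would first pin down the limit: by measure-preservation and $m$-dependence,
\begin{equation}
\var(S_n(f_m))=\sum_{k,l\in V_n}\esp[f_m\,(f_m\circ T_{l-k})]=\sum_{|j|_\infty\le m}N_n(j)\,\esp[f_m\,(f_m\circ T_{j})],
\end{equation}
where $N_n(j)=\#\{k\in V_n:k+j\in V_n\}=|V_n|(1+o(1))$ for each fixed $j$ as $n\to\infty$, the $o(1)$ coming from boundary layers of width $m$. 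Dividing by $|V_n|$ gives $\var(S_n(f_m))/|V_n|\to\sum_{|j|_\infty\le m}\esp[f_m(f_m\circ T_j)]$, which equals the stated $\sigma_m^2$ because the omitted terms with $|k|_\infty>m$ vanish, $f_m$ being centered and independent of $f_m\circ T_k$ there; in particular $\sigma_m^2\ge 0$. The same computation yields the crude bound $\var(S(A,f_m))\le C_{m,d}\,|A|\,\snn{f_m}_2^2$ for every finite $A\subset\Zd$, which I use repeatedly below.

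Next I would truncate. Because $f_m^{(M)}\defe f_m\ind_{\{|f_m|\le M\}}-\esp(f_m\ind_{\{|f_m|\le M\}})$ is a bounded, centered, measurable function of $f_m$, it is again $m$-dependent, and $r_m^{(M)}\defe f_m-f_m^{(M)}$ satisfies $\snn{r_m^{(M)}}_2\to0$ as $M\to\infty$. By the variance bound above and Chebyshev's inequality, $\limsup_n\proba(|S_n(r_m^{(M)})|>\epsilon|V_n|^{1/2})\le C_{m,d}\snn{r_m^{(M)}}_2^2/\epsilon^2\to0$ as $M\to\infty$, while the corresponding limit variance $\sigma_{m,M}^2=\sum_{|j|_\infty\le m}\esp[f_m^{(M)}(f_m^{(M)}\circ T_j)]$ converges to $\sigma_m^2$. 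Hence, by the standard approximation lemma for weak convergence, it suffices to prove the CLT for each bounded field $f_m^{(M)}$.

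For fixed $M$, I would partition $V_n$ into cubes of side $p=p_n\to\infty$ separated by corridors of width $m+1$. By $m$-dependence the block sums $Y_1,\dots,Y_N$ over distinct cubes are independent, and by stationarity identically distributed, while the corridor remainder occupies a vanishing volume fraction and so has variance $o(|V_n|)$; thus $S_n(f_m^{(M)})$ equals $\sum_j Y_j$ up to an $L^2$-negligible term. Each $Y_j$ has variance $p^d\sigma_{m,M}^2(1+o(1))$, giving total variance $|V_n|\sigma_{m,M}^2(1+o(1))$. Choosing $p_n$ so that $p_n\to\infty$ yet $p_n^{2d}=o(|V_n|)$ (for instance $p_n=\lfloor|V_n|^{1/(4d)}\rfloor$) forces $|Y_j|\le Mp_n^d=o(|V_n|^{1/2})$ uniformly, so the Lindeberg sum is identically zero for large $n$; the Lindeberg--Feller theorem then gives $\sum_j Y_j/|V_n|^{1/2}\weakto\calN(0,\sigma_{m,M}^2)$, completing the reduction.

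I expect the main obstacle to be the $L^2$ hypothesis: one cannot invoke a mixing-type CLT of \cite{bolthausen82central} directly, since that requires a $(2+\delta)$ moment, and for a single huge block of side $p_n$ the individual summands cannot be controlled termwise. The truncation step is exactly what resolves this, and it works only because truncation preserves $m$-dependence and the range-$m$ variance bound keeps the discarded part uniformly small in $n$. The secondary delicate point is the two-scale calibration of $p_n$: the blocks must be simultaneously numerous enough that their variances aggregate to the right limit and small enough that boundedness kills the Lindeberg tail, which the balance $p_n^{2d}=o(|V_n|)$ secures.
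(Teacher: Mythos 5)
The paper never actually proves this theorem: it is quoted as known, ``a consequence of \cite{bolthausen82central} (see also \cite{rosen69note}).'' Your proposal is therefore a genuinely different, self-contained route: truncation, Bernstein big-block/small-block decomposition, and Lindeberg--Feller. The route is sound, and your truncation step addresses a real point that the paper's one-line citation glosses over: for an $m$-dependent field the mixing-rate hypotheses of Bolthausen's theorem are vacuous, but his moment hypothesis (boundedness or a $(2+\delta)$-moment) is not, so the $L^2$-only statement needs exactly the reduction you carry out (or a result in the style of Ros\'en). Your variance computation, the crude bound $\var(S(A,f_m))\leq C_{m,d}|A|\snn{f_m}_2^2$, the preservation of $m$-dependence and centering under truncation, and the convergence-together lemma are all correct. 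One definitional caveat, which is the paper's fault rather than yours: the paper's literal definition of $m$-dependence is pairwise ($f\circ T_k$ independent of $f\circ T_l$ for $|k-l|_\infty>m$), whereas your blocking step needs independence of the whole families over separated blocks. Under the literal pairwise reading the theorem itself would be false, so the block version is what is meant, and it is what holds in the paper's application, where $f_m=\esp(f\mid\filF_{\ip m})$ is a function of finitely many i.i.d.\ coordinates.

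There is one concrete flaw: the calibration $p_n=\lfloor|V_n|^{1/(4d)}\rfloor$. The paper assumes only $V_n=\prod_{i=1}^d\{1,\dots,m_i\topp n\}$ with each $m_i\topp n\to\infty$, with no relation between the rates, and your cubic blocks must also satisfy $p_n=o(\min_i m_i\topp n)$ for them to fit inside $V_n$ and for the corridors to occupy a vanishing volume fraction. The choice $|V_n|^{1/(4d)}$ need not satisfy this: with $d=2$, $m_1\topp n=n$ and $m_2\topp n=2^n$, one has $|V_n|^{1/8}\gg n=m_1\topp n$, so not a single block fits in the first coordinate direction. The fix is immediate and does not disturb the rest of the argument: use rectangular blocks with side $p_i\topp n=\lceil (m_i\topp n)^{1/3}\rceil$ in direction $i$ (or cap your $p_n$ at $\min_i(m_i\topp n)^{1/2}$). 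Then every side tends to infinity, $p_i\topp n=o(m_i\topp n)$ so the corridor fraction vanishes, the blocks remain translates of one another so the block sums are i.i.d., and the block volume is of order $|V_n|^{1/3}=o(|V_n|^{1/2})$, so the Lindeberg sum is still identically zero for large $n$ whenever $\sigma_{m,M}>0$ (the degenerate case $\sigma_{m,M}=0$ being trivial by Chebyshev). With that repair your proof is complete.
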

Now, consider the function $f\in L_0^2(\proba)$ and define
\equh\label{eq:plus}
\nn f_{V,+} = \limsup_{n\to\infty}\frac{\nn{S_n(f)}_2}{|V_n|^{1/2}}\,.
\eque
We refer to the pseudo norm defined by $\nn\cdot_{V,+}$ as the {\it plus-norm}.
\begin{Lem}\label{lem:CLT}
Suppose $f,f_1,f_2,\dots\in L_0^2(\proba)$ and $f_m$ is $m$-dependent for all $m\in\mathbb N$.
If
\equh\label{eq:2}
\limm\nn{f-f_m}_{V,+} = 0\,,
\eque
then
\equh\label{eq:1}
\limm \sigma_m = \limm \nn{f_m}_{V,+} =:\sigma<\infty
\eque
exists, and 
\equh\label{eq:CLT}
\frac{S_n(f)}{|V_n|^{1/2}}\weakto\calN(0,\sigma^2)\,.
\eque
\end{Lem}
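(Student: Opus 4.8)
The plan is to treat $\nn\cdot_{V,+}$ as a seminorm and to combine Theorem~\ref{thm:rosen} with the classical approximation-by-truncation criterion for weak convergence. First I would record that $\nn\cdot_{V,+}$ is subadditive: since $f\mapsto S_n(f)$ is linear, Minkowski's inequality gives $\snn{S_n(f+g)}_2\le\snn{S_n(f)}_2+\snn{S_n(g)}_2$, and dividing by $|V_n|^{1/2}$ and invoking subadditivity of $\limsup$ yields $\nn{f+g}_{V,+}\le\nn f_{V,+}+\nn g_{V,+}$; positive homogeneity is immediate from $S_n(cf)=cS_n(f)$. This triangle inequality is the structural fact on which everything else rests.

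Second, I would identify $\nn{f_m}_{V,+}$ with $\sigma_m$. Writing $\esp[S_n(f_m)^2]=\sum_{k,l\in V_n}\esp[f_m\,(f_m\circ T_{l-k})]$ (using that the $T_k$ are measure-preserving and that the group is Abelian) and letting $N_n(j)$ be the number of pairs $(k,l)\in V_n\times V_n$ with $l-k=j$, we obtain $\esp[S_n(f_m)^2]/|V_n|=\sum_j (N_n(j)/|V_n|)\,\esp[f_m(f_m\circ T_j)]$. By $m$-dependence only the finitely many $j$ with $|j|_\infty\le m$ contribute, and for each such fixed $j$ one has $N_n(j)/|V_n|=\prod_{i}(1-|j_i|/m_i\topp n)\to1$ as the rectangles $V_n$ grow in every coordinate. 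Hence $\esp[S_n(f_m)^2]/|V_n|\to\sigma_m^2$; in particular $\sigma_m^2\ge0$, the $\limsup$ defining the plus-norm is in fact a genuine limit, and $\nn{f_m}_{V,+}=\sigma_m<\infty$.

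Third, I would deduce convergence of the variances and then run the approximation criterion. By the triangle inequality and~\eqref{eq:2}, $|\sigma_m-\sigma_{m'}|\le\nn{f_m-f_{m'}}_{V,+}\le\nn{f_m-f}_{V,+}+\nn{f-f_{m'}}_{V,+}\to0$ as $m,m'\to\infty$, so $(\sigma_m)$ is Cauchy and converges to some $\sigma<\infty$ (and the same estimate gives $\nn f_{V,+}=\sigma$). For the central limit theorem itself I would invoke the standard approximation theorem for weak convergence (Billingsley): with $X_n=S_n(f)/|V_n|^{1/2}$ and $X_{n,m}=S_n(f_m)/|V_n|^{1/2}$, Theorem~\ref{thm:rosen} gives $X_{n,m}\weakto\calN(0,\sigma_m^2)$ as $n\to\infty$, while $\calN(0,\sigma_m^2)\weakto\calN(0,\sigma^2)$ as $m\to\infty$ since $\sigma_m\to\sigma$. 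For the remaining hypothesis, Chebyshev's inequality and linearity of $S_n$ give $\proba(|X_n-X_{n,m}|>\epsilon)\le\epsilon^{-2}\snn{S_n(f-f_m)}_2^2/|V_n|$, so that $\limsup_{n\to\infty}\proba(|X_n-X_{n,m}|>\epsilon)\le\epsilon^{-2}\nn{f-f_m}_{V,+}^2\to0$ as $m\to\infty$. The approximation theorem then delivers $X_n\weakto\calN(0,\sigma^2)$.

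The only step carrying real content beyond bookkeeping is the second one, the identification $\nn{f_m}_{V,+}=\sigma_m$ together with the upgrade of the $\limsup$ to a true limit; this is precisely where $m$-dependence and the pair-counting $N_n(j)/|V_n|\to1$ are used, and also where one must confirm $\sigma_m^2\ge0$ so that $\sigma_m$ is well defined. Everything else is the seminorm triangle inequality and the off-the-shelf approximation theorem, so I do not anticipate a serious obstacle here.
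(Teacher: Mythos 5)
Your proof is correct and follows essentially the same route as the paper: the paper's argument is exactly your step three (the Cauchy estimate $|\sigma_{m_1}-\sigma_{m_2}|\le \snn{f_{m_1}-f}_{V,+}+\snn{f_{m_2}-f}_{V,+}$ via the seminorm triangle inequality), with your step two (the pair-counting identification $\sigma_m=\lim_n\snn{S_n(f_m)}_2/|V_n|^{1/2}$) stated there as an observation and your step four (Billingsley's approximation theorem combined with Theorem~\ref{thm:rosen} and Chebyshev) left implicit behind the sentence ``it suffices to prove~\eqref{eq:1}.'' You have simply supplied the details the paper omits, so there is nothing to correct.
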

\begin{proof}
It suffices to prove~\eqref{eq:1}.
We will show that $\{\sigma_m^2\}_{m\in\mathbb N}$ forms a Cauchy sequence in $\mathbb R_+$. Observe that since $f_m$ is $m$-dependent with zero mean, 
\[
\sigma_m = \limn \frac{\snn {S_n(f_{m})}_2}{|V_n|^{1/2}} \,.
\]
It then follows that
\eqnh
|\sigma_{m_1}-\sigma_{m_2}| &  \leq & {\limsup_{n\to\infty}\frac{{\snn{S_n(f_{m_1}-f_{m_2})}_2}}{|V_n|^{1/2}}}\\
& \leq & \snn{f_{m_1}-f}_{V,+}+\snn{f_{m_2}-f}_{V,+}\,,
\eqne
which can be made arbitrarily small by taking $m_1, m_2$ large enough. We have thus shown that $\indn{\sigma^2}$ is a Cauchy sequence in $\mathbb R_+$.
\end{proof}
\begin{Rem}
The idea of establishing the central limit theorem by controlling the quantity $\snn{f-f_m}_{V,+}$ dates back to  \cite{gordin69central}, where $f_m$ was selected from a different subspace. In the one-dimensional case, when $V_n = \{1,\dots,n\}$,  \cite{zhao08martingale} named $\nn\cdot_{V,+}$ the plus-norm, and established a necessary and sufficient condition for the martingale approximation, in term of the plus-norm. See  \cite{peligrad10conditional} and  \cite{gordin11functional} for improvements and more discussions on such conditions.
\end{Rem}
In the next section, we will establish conditions, under which~\eqref{eq:2} holds.
\section{A Central Limit Theorem}\label{sec:CLT}
From this section on, we will focus on stationary multiparameter random fields, defined on product probability spaces. On Such a space, any integrable function has a natural $L^2$-approximation by $m$-dependent functions, and there is a natural commuting filtration.

For the sake of simplicity, we consider only the 2-parameter random fields in the sequel and simply say `random fields' for short. We will prove a central limit theorem here and then an invariance principle in the next section. The argument, however, can be generalized easily to $d$-parameter random fields, and the result has been stated in Theorem~\ref{thm:1}.

We start with a product probability space with i.i.d.~random variables $\indij\epsilon$. Recall that $\indij T$ are the group of shift operators on $\mathbb R^{\mathbb Z^2}$ and write $\filF_{\infty,\infty} = \sigma(\epsilon_{i,j}:(i,j)\in\mathbb Z^2)$. We focus on the class of functions $\calL_0^p = \{f\in L^p(\filF_{\infty,\infty}): \esp f = 0\}, p\geq 2$. For all measurable function $f\in\calL_0^2$, define, for all $m\in\mathbb N$, 
\equh\label{eq:fm}
f_m \defe \esp\spp{f| \filF_{\ip m}}\qmwith \filF_{\ip m} = \sigma(\epsilon_j:j\in\{-m,\dots,m\}^2)\,.
\eque
Clearly, $f_m\in\calL_0^2$, $\nn {f-f_m}_2\to 0$ as $m\to\infty$ and $\indij{f_m\circ T}$ are $m$-dependent functions.

Now, recall the natural filtration $\indij\filF$ defined by $\filF_{k,l} = \sigma(\epsilon_{i,j}:i\leq k, j\leq l)$. This is a 2-parameter filtration, i.e., 
\equh\label{eq:multiFiltration}
\filF_{i,j}\subset\filF_{k,l}\qmif i\leq k, j\leq l\,.
\eque
Also, 
\equh\label{eq:nested}
T_{-i,-j}\filF_{k,l} = \filF_{k+i,l+j}\,,\forall (i,j),(k,l)\in\mathbb Z^2\,.
\eque
Moreover, the notion of commuting filtration is of importance to us.
\begin{Def}\label{def:commuting}
A filtration $\indij\filF$ is {\it commuting}, if for all $\filF_{k,l}$-measurable bounded random variable $Y$, $\esp(Y|\filF_{i,j}) = \esp(Y|\filF_{i\wedge k,j\wedge l})$.
\end{Def}
Since $\{\epsilon_{k,l}\}_{(k,l)\in\mathbb Z^2}$ are independent random variables, $\indij\filF$ is {commuting} (see Proposition~\ref{prop:commuting} in Section~\ref{sec:proofs}).  This implies that the {\it marginal filtrations}
\equh\label{eq:marginal}
\filF_{i,\infty} = \bigvee_{j\geq 0}\filF_{i,j} \qmand \filF_{\infty,j} = \bigvee_{i\geq 0}\filF_{i,j}
\eque
are commuting, in the sense that for all $Y\in L^1(\proba)$, 
\equh\label{eq:marginalCommuting}
\esp[\esp(Y|\filF_{i,\infty})|\filF_{\infty,j}] = \esp[\esp(Y|\filF_{\infty,j})|\filF_{i,\infty}] = \esp(Y|\filF_{i,j})\,.
\eque
For more details on the commuting filtration, see  \cite{khoshnevisan02multiparameter}.

For all $\filF_{0,0}$-measurable function $f\in\calL_0^2$, write
\equh\label{eq:Smn}
S_{m,n}(f) = \summ i1m\summ j1{n} f\circ T_{i,j}\,.
\eque
Thanks to the commuting structure of the filtration, applying twice the maximal inequality in  \cite{peligrad07maximal}, we can prove the following moment inequality with $p\geq 2$:
\equh\label{eq:PUW}
\snn{S_{m,n}(f)}_p \leq Cm^{1/2}n^{1/2}\Delta_{(m,n),p}(f)
\eque
with
\[
\Delta_{(m,n),p}(f) = \summ k1{m}\summ l1{n}\frac{\snn{\esp(S_{k,l}(f)\mid\filF_{1,1})}_p}{k^{3/2}l^{3/2}}\,.
\]
In fact, we will prove a stronger inequality without the assumptions of product probability space and the $\filF_{0,0}$-measurability of $f$. See Section~\ref{sec:momentInequality}, Proposition~\ref{prop:momentInequality} and Corollary~\ref{coro:1}.
 
Recall that
\equh\label{eq:wtDelta}
\wt\Delta_{2,p}(f) 
= \sif k1\sif l1 \frac{\snn{\esp(f\circ T_{k,l}\mid\filF_{1,1})}_p}{k^{1/2}l^{1/2}}\,.
\eque
Now, we can prove the following central limit theorem for adapted stationary random fields.
\begin{Thm}\label{thm:CLT}
Consider the product probability space discussed above. Let $\indn V$ be as in~\eqref{eq:Vn} with $d=2$. Suppose $f\in\calL_0^2$, $f\in\filF_{0,0}$, and define $f_m$ as in~\eqref{eq:fm}. If $\wt\Delta_{2,2}(f)<\infty$,
then \[
\limm\nn {f-f_m}_{V,+} = 0\,.
\]
Therefore, $\sigma \defe \limm\nn{f_m}_{V,+}<\infty$ exist and $S_n(f)/|V_n|^{1/2}\weakto\calN(0,\sigma^2)$.
\end{Thm}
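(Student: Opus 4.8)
The plan is to reduce everything to Lemma~\ref{lem:CLT}. Since the shifts $\{f_m\circ T_{i,j}\}$ are $m$-dependent by construction, once we establish $\lim_m\nn{f-f_m}_{V,+}=0$ the existence of $\sigma$ and the convergence $S_n(f)/|V_n|^{1/2}\weakto\calN(0,\sigma^2)$ follow at once from that lemma; so the whole proof is the verification of this plus-norm estimate. Note first that $g_m\defe f-f_m$ is legitimate input for the machinery: it is mean-zero (as $\esp f_m=\esp f=0$), and it is $\filF_{0,0}$-measurable because $f$ is and $f_m=\esp(f\mid\filF_{\ip m})$ then depends only on the coordinates $\epsilon_{i,j}$ with $-m\le i\le 0$ and $-m\le j\le 0$.

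First I would apply the moment inequality~\eqref{eq:PUW} to $g_m$ over the rectangle $V_n$, divide by $|V_n|^{1/2}$, and pass to $\limsup_n$, obtaining $\nn{g_m}_{V,+}\le C\,\limsup_n\Delta_{(m_1\topp n,m_2\topp n),2}(g_m)$. I would then prove the uniform comparison $\Delta_{(M,N),2}(g)\le C\,\wt\Delta_{2,2}(g)$, valid for every rectangle $(M,N)$ and every admissible $g$, which is the two-dimensional analogue of the implication that~\eqref{cond:MW00'} yields~\eqref{cond:MW00}. Concretely, bound $\snn{\esp(S_{k,l}(g)\mid\filF_{1,1})}_2$ by $\sum_{i\le k,\,j\le l}\snn{\esp(g\circ T_{i,j}\mid\filF_{1,1})}_2$ via the triangle inequality, interchange the order of the resulting four-fold summation, and use $\sum_{k\ge i}k^{-3/2}\le C\,i^{-1/2}$ in each coordinate. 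These two steps combine to give $\nn{g_m}_{V,+}\le C\,\wt\Delta_{2,2}(f-f_m)$, so it remains only to show $\wt\Delta_{2,2}(f-f_m)\to 0$ as $m\to\infty$.

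I would prove this last convergence by dominated convergence on the lattice $\mathbb N^2$ with counting measure. For termwise convergence, for each fixed $(k,l)$ we have $\snn{\esp((f-f_m)\circ T_{k,l}\mid\filF_{1,1})}_2\le\snn{(f-f_m)\circ T_{k,l}}_2=\snn{f-f_m}_2$, which tends to $0$ since $\filF_{\ip m}\uparrow\filF_{\infty,\infty}$ and $f$ is $\filF_{\infty,\infty}$-measurable (martingale convergence). For the dominating envelope I would establish $\snn{\esp(f_m\circ T_{k,l}\mid\filF_{1,1})}_2\le\snn{\esp(f\circ T_{k,l}\mid\filF_{1,1})}_2$, so that each summand of $\wt\Delta_{2,2}(f-f_m)$ is at most $2\,\snn{\esp(f\circ T_{k,l}\mid\filF_{1,1})}_2/(k^{1/2}l^{1/2})$, whose sum over $(k,l)$ is $2\,\wt\Delta_{2,2}(f)<\infty$. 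Dominated convergence then forces $\wt\Delta_{2,2}(f-f_m)\to 0$, which completes the argument.

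I expect the dominating bound to be the only delicate point, and its source is the product structure. By measure-preservation, $f_m\circ T_{k,l}=\esp\bbpp{f\circ T_{k,l}\mid\calG}$ where $\calG=\sigma(\epsilon_{a,b}:|a-k|\le m,\,|b-l|\le m)$ is a finite coordinate box; write $\calG=\sigma_A$ for its index set $A$ and $\filF_{1,1}=\sigma_B$ with $B=\{(a,b):a\le 1,\,b\le 1\}$. On a product space the conditional expectations onto coordinate-projection $\sigma$-algebras compose by intersection, i.e.\ $\esp(\esp(\,\cdot\mid\sigma_A)\mid\sigma_B)=\esp(\,\cdot\mid\sigma_{A\cap B})$, which is precisely the mechanism underlying the commuting property of Definition~\ref{def:commuting} and Proposition~\ref{prop:commuting}. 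Hence $\esp(f_m\circ T_{k,l}\mid\filF_{1,1})=\esp(f\circ T_{k,l}\mid\sigma_{A\cap B})$, and since $\sigma_{A\cap B}\subseteq\filF_{1,1}$ the $L^2$-contraction property of conditional expectation yields the claimed inequality. The only care needed is to phrase this composition identity for boxes rather than merely for the quadrant filtration of Definition~\ref{def:commuting}, but it is a direct consequence of Fubini applied to the product measure.
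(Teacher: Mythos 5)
Your proposal is correct and follows essentially the same route as the paper's proof: reduction to Lemma~\ref{lem:CLT}, the moment inequality~\eqref{eq:PUW} together with the comparison $\Delta_{(\infty,\infty),2}\leq C\,\wt\Delta_{2,2}$, and dominated convergence with the envelope $2\snn{\esp(f\circ T_{k,l}\mid\filF_{1,1})}_2/(k^{1/2}l^{1/2})$ obtained from the commuting/product structure (Proposition~\ref{prop:commuting}). The only differences are cosmetic: you derive the domination via the intersection identity $\esp(\esp(\cdot\mid\sigma_A)\mid\sigma_B)=\esp(\cdot\mid\sigma_{A\cap B})$ where the paper exchanges the order of the two conditional expectations, and you explicitly check $f_m\in\filF_{0,0}$, a point the paper uses implicitly.
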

\begin{proof}
The second part follows immediately from Lemma~\ref{lem:CLT}.
It suffices to prove $\nn{f-f_m}_{V,+}\to 0$ as $m\to\infty$. 
First, by the fact that 
\[
\snn{\esp(S_{k,l}(f)\mid\filF_{1,1})}_2\leq \summ i1k\summ j1l\snn{\esp\spp{f\circ T_{i,j}\mid\filF_{1,1}}}_2
\]
and Fubini's theorem, we have $\Delta_{(\infty,\infty),2}(f)\leq 9\wt\Delta_{2,2}(f)$. 
So, by~\eqref{eq:plus} and~\eqref{eq:PUW}, it suffices to show 
\equh\label{eq:DCT0}
\wt\Delta_{2,2}(f-f_m)= \sif k1\sif l1\frac{\snn{\esp[(f-f_m)\circ T_{k,l}\mid\filF_{1,1}]}_2}{k^{1/2}l^{1/2}} \to 0
\eque
as $m\to\infty$.
Clearly, the summand in~\eqref{eq:DCT0} converges to 0 for each $k,l$ fixed, since~\eqref{eq:fm} implies $\nn{f-f_m}_2 \to 0$ as $m\to\infty$ and $\snn{\esp[(f-f_m)\circ T_{k,l}\mid\filF_{1,1}]}_2 \leq \snn{f-f_m}_2$. Moreover,
observe that,
\eqnh
  \esp\spp{f_m\circ T_{k,l}\mid\filF_{1,1}}  & = & \esp\sbb{\esp\spp{f\circ T_{k,l}\mid T_{-k,-l}(\filF_{\left\langle m\right\rangle})}\mid\filF_{1,1}}\\
& = & \esp\sbb{\esp\spp{f\circ T_{k,l}\mid\filF_{1,1}}\mid T_{-k,-l}(\filF_{\left\langle m\right\rangle})}\,,
\eqne
where in the second equality we can exchange the order of conditional expectations by the definitions of $\filF_{1,1}$ and $T_{-k,-l}(\filF_{\left\langle m\right\rangle})$ (see Proposition~\ref{prop:commuting} in Section~\ref{sec:proofs} for a detailed treatment). Therefore,
\eqnh
& & \snn{\esp\sbb{(f-f_m)\circ T_{k,l}\mid\filF_{1,1}}}_2 \\
\eqnhspace \quad\quad\quad\leq
  \snn{\esp\spp{f\circ T_{k,l}\mid\filF_{1,1}}}_2 + \snn{\esp\spp{f_m\circ T_{k,l}\mid\filF_{1,1}}}_2 \\
\eqnhspace \quad\quad\quad \leq 2\snn{\esp\spp{f\circ T_{k,l}\mid\filF_{1,1}}}_2\,.
\eqne
Then, the condition $\wt\Delta_{2,2}(f)<\infty$ combined with the dominated convergence theorem yields~\eqref{eq:DCT0}. The proof is thus completed.
\end{proof}
\begin{Rem}An `extension' of Maxwell--Woodroofe condition~\eqref{cond:MW00} to high dimension remains an open problem. Namely if we replace $\wt\Delta_{2,2}(f)<\infty$ by $\Delta_{(\infty,\infty),2}(f)<\infty$ in Theorem~\ref{thm:CLT}, do we have the same conclusion? The latter condition is significantly weaker than the former one. 
\end{Rem}

\section{An Invariance Principle}\label{sec:IP}
Recall the space $C[0,1]^2$ and the 2-parameter Brownian sheet $\{\B(t)\}_{t\in[0,1]^2}$.
\begin{Thm}\label{thm:IP}
Under the assumptions in Theorem~\ref{thm:CLT}, suppose in addition that $f\in\calL_0^p$ and $\wt\Delta_{2,p}(f)<\infty$ for some $p>2$. Write $B_{n,t}(f)$ as in~\eqref{eq:Bntf} with $d=2$. Then, 
\[
\frac{B_{n,\cdot}(f)}{|V_n|^{1/2}}\weakto\sigma\mathbb B(\cdot)\,,
\]
where `$\ \weakto$' stands for weak convergence of probability measures on $C[0,1]^2$.
\end{Thm}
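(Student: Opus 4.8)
The plan is to prove~\eqref{eq:IP} in the two classical steps: weak convergence of the finite-dimensional distributions of $B_{n,\cdot}(f)/|V_n|^{1/2}$ to those of $\sigma\B(\cdot)$, followed by tightness in $C[0,1]^2$. By the construction~\eqref{eq:Bntf}, $B_{n,t}(f)$ is a piecewise-multilinear interpolation of the rectangular partial sums, so the process already has continuous paths and no regularization of trajectories is needed. As everywhere in the paper, both steps will be reduced, through the $m$-dependent approximant $f_m$ of~\eqref{eq:fm} and the plus-norm $\nn\cdot_{V,+}$, to tools already in hand: Theorem~\ref{thm:rosen} for the $m$-dependent Gaussian limit and the moment inequality~\eqref{eq:PUW} for the error control. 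Note also that $B_{n,t}(f)=0$ whenever $\min_i t_i=0$, since then the rectangle $V_n(t)$ degenerates; this kills the lower-dimensional boundary faces and simplifies the tightness criterion below.

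For the finite-dimensional distributions, fix $t^{(1)},\dots,t^{(r)}\in[0,1]^2$ and, by the Cram\'er--Wold device, consider a single linear combination $L_n(g)=\sum_{j=1}^r c_j B_{n,t^{(j)}}(g)$. Reorganizing the Lebesgue weights over a common refining grid of the $t^{(j)}$, one writes $L_n(g)=\sum_{k\in V_n} w_{n,k}\,(g\circ T_k)$ with coefficients $w_{n,k}$ that are constant on disjoint grid cells. Replacing $f$ by $f_m$ and arguing exactly as in Theorem~\ref{thm:CLT}, the error $L_n(f)-L_n(f_m)$, normalized by $|V_n|^{1/2}$, is controlled in $L^2$ uniformly in $n$ by the moment inequality applied cellwise, and tends to $0$ as $m\to\infty$ because $\wt\Delta_{2,2}(f-f_m)\to0$. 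For the $m$-dependent field $f_m$, the vector $\bigl(B_{n,t^{(j)}}(f_m)/|V_n|^{1/2}\bigr)_{j}$ is asymptotically Gaussian by a blockwise application of Theorem~\ref{thm:rosen}: the cell sums are asymptotically independent because cells separated by more than $m$ contribute independently, and adjacent cells overlap only in $O(m)$ boundary layers. The limiting covariance is then identified directly: since $\sum_{k}\esp[f_m(f_m\circ T_k)]$ converges, the normalized covariance $\esp[B_{n,s}(f_m)B_{n,t}(f_m)]/|V_n|$ concentrates on near-zero lags and tends to $\sigma_m^2\prod_{i=1}^2\min(s_i,t_i)$, the overlap area of the two nested rectangles; letting $m\to\infty$ replaces $\sigma_m^2$ by $\sigma^2$, matching the covariance of $\sigma\B$.

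For tightness I would invoke the multiparameter rectangular-increment criterion of Bickel and Wichura, which, because $B_{n,\cdot}(f)$ vanishes on the lower boundary faces, reduces to a single moment bound on increments over rectangles. Let $\xi_n$ denote the increment of $B_{n,\cdot}(f)$ over a rectangle $[s,t]\subset[0,1]^2$. By stationarity $\xi_n$ has the same law as a rectangular partial sum $S_{a,b}(f)$ with side lengths $a\approx(t_1-s_1)m_1^{(n)}$ and $b\approx(t_2-s_2)m_2^{(n)}$, up to boundary blocks carrying at most one lattice layer. The moment inequality~\eqref{eq:PUW}, in the form of Proposition~\ref{prop:momentInequality} and Corollary~\ref{coro:1}, with the exponent $p>2$ gives $\snn{S_{a,b}(f)}_p\le C\,a^{1/2}b^{1/2}\,\wt\Delta_{2,p}(f)$, so after normalization
\[
\esp\big|\,|V_n|^{-1/2}\xi_n\,\big|^p \;\le\; C'\big(\lambda([s,t])\big)^{p/2},
\]
with $p/2>1$. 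Since the exponent exceeds $1$, the Bickel--Wichura criterion is satisfied and $\{B_{n,\cdot}(f)/|V_n|^{1/2}\}_n$ is tight in $C[0,1]^2$; together with the fidi convergence this yields~\eqref{eq:IP}.

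The main obstacle I anticipate lies in the finite-dimensional step rather than in tightness. One must simultaneously produce the joint Gaussian limit with the precise Brownian-sheet covariance $\sigma^2\prod_i\min(s_i,t_i)$ and keep the $m$-dependent approximation error uniformly small across all rectangles entering the linear combination. The delicate points are justifying the blockwise, weighted version of the $m$-dependent central limit theorem (Theorem~\ref{thm:rosen} is stated only for unweighted sums over $V_n$) and verifying that the cross-cell and boundary-layer covariance contributions are asymptotically negligible --- which is exactly where summability of the covariances, guaranteed by $\wt\Delta_{2,2}(f)<\infty$, is essential. Tightness, by contrast, is essentially handed to us by the moment inequality once the exponent $p/2>1$ is available.
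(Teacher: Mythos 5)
Your overall architecture --- $m$-dependent approximation, finite-dimensional convergence plus a $p$-th moment bound on rectangular increments via \eqref{eq:PUW} for tightness --- is exactly the paper's, and your error control through $\wt\Delta_{2,p}(f-f_m)$ is right. But there are two gaps, one of which you flag yourself and one you do not.

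First, the finite-dimensional step. You correctly observe that Theorem~\ref{thm:rosen} covers only unweighted sums over $V_n$, and you propose a Cram\'er--Wold plus blocking argument (trim layers of width $m$ between cells of the refinement grid, use independence of trimmed blocks, identify the covariance) --- which you then concede is the ``main obstacle.'' The paper sidesteps this entirely: for the $m$-dependent approximant $f_m$, the full invariance principle (hence fidi convergence with Brownian-sheet covariance $\sigma_m^2\prod_i\min(s_i,t_i)$) is already available in the literature for $BL(\theta)$-dependent random fields, a class containing $m$-dependent stationary fields; the paper simply cites \cite{shashkin03invariance} and then transfers the limit to $f$ via $\nn{f-f_m}_{V,+}\to0$. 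Your blocking sketch could in principle be completed, but as written it re-derives a known theorem without the details (asymptotic negligibility of the trimmed layers, joint Gaussianity of the block sums); either carry those out or cite the result.

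Second, tightness. Your reduction ``$\xi_n$ has the same law as a rectangular partial sum $S_{a,b}(f)$ with $a\approx(t_1-s_1)m_1^{(n)}$, $b\approx(t_2-s_2)m_2^{(n)}$'' is only valid when both rescaled side lengths are at least one lattice cell, i.e.\ when the integer side counts satisfy $m_i\geq 2$, so that $m_i\leq 2\,n_i(s_i-r_i)$. When a side of $[s,t]$ is shorter than a lattice cell, the increment is a \emph{fractional interpolation weight} times a single-layer sum, and the bound $C\lambda([s,t])^{p/2}$ does not follow from \eqref{eq:PUW} alone: one gets a factor $n_1(s_1-r_1)$ linearly from the weight, and must use $n_1(s_1-r_1)\in(0,2)$ to trade it for $\sqrt{2}\,[n_1(s_1-r_1)]^{1/2}$. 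This sub-cell regime is exactly where the paper spends the second half of its tightness proof; without it the increment bound, hence your Bickel--Wichura criterion (or the paper's citation of Nagai), is not established for all rectangles. That substitution of Bickel--Wichura for Nagai is otherwise harmless: from the single-block bound and Cauchy--Schwarz the neighboring-block condition holds with total exponent $p/2>1$.
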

\begin{proof}
It suffices to show that the finite-dimensional distributions converge, and $\{B_{n,t}(f)/|V_n|^{1/2}\}_{t\in[0,1]^2}$ is tight. 

We first show that, for all $\wt t = (t\topp1,\dots,t\topp k)\subset[0,1]^2$, 
\equh\label{eq:fdd}
\bpp{\frac{B_{n,t\topp1}(f)}{|V_n|^{1/2}},\cdots,\frac{B_{n,t\topp k}(f)}{|V_n|^{1/2}}}\weakto \sigma(\mathbb B(t\topp1),\cdots,\mathbb B(t\topp k)) =:\sigma\wt{\mathbb B}_{\wt t}\,.
\eque
Consider  the $m$-dependent function $f_m$ defined in~\eqref{eq:fm}. Then, the convergence of the finite-dimensional distributions~\eqref{eq:fdd} with $f$ replaced by $f_m$ follows from the invariance principle of $m$-dependent random fields (see e.g.~\cite{shashkin03invariance}). Furthermore, by Theorem~\ref{thm:CLT}, $\wt\Delta_{2,2}(f)\leq\wt\Delta_{2,p}(f)<\infty$, so that $\nn{f-f_m}_{V,+}\to 0$ as $m\to\infty$, and therefore, letting $\wt B_{n,\wt t}(f)/|V_n|^{1/2}$ denote the left-hand side of~\eqref{eq:fdd}, $\wt B_{n,\wt t}(f_m-f)/|V_n|^{1/2}\to (0,\dots,0)\in\mathbb R^k$ in probability. The convergence of the finite-dimensional distribution~\eqref{eq:fdd} follows.

Now, we prove the tightness of $\{B_{n,t}(f)\}_{t\in[0,1]^2}$. Fix $n$ and consider 
\[
V_n = \{1,\dots,n_1\}\times\{1,\dots,n_2\}\,.
\]
Write $B_{n,t} \equiv B_{n,t}(f)$ and $S_{m,n} \equiv S_{m,n}(f)$ for short. For all $0\leq r_1<s_1\leq 1, 0\leq r_2<s_2\leq 1$, set,
\[
B_n((r_1,s_1]\times(r_2,s_2]) \defe  B_{n,(s_1,s_2)} -  B_{n,(r_1,s_2)} - B_{n,(s_1,r_2)} +  B_{n,(r_1,r_2)}\,.
\]
We will show that there exists a constant $C$, independent of $n, r_1,r_2,s_1$ and $s_2$, such that
\equh\label{eq:bound}
(n_1n_2)^{-1/2}\nn{B_n((r_1,s_1]\times(r_2,s_2])}_p\leq C\sqrt{(s_1-r_1)(s_2-r_2)}\wt\Delta_{2,p}(f)\,.
\eque
Inequality~\eqref{eq:bound} implies the tightness, by  \cite{nagai74simple}, Theorem 1.

Now, we prove~\eqref{eq:bound} to complete the proof. From now on, the constant $C$ may change from line to line.
Write $m_i = \left\lfloor n_is_i\right\rfloor - \left\lfloor n_ir_i\right\rfloor, i = 1,2$. If $m_i\geq 2, i = 1,2$, then 
\eqnhn
& & \snn{B_n((r_1,s_1]\times(r_2,s_2])}_p  \nonumber\\
& &\leq   \nn{S_{m_1,m_2}}_p  + 2\snn{S_{m_1,1}}_p + 2\snn{S_{1,m_2}}_p+ 4\snn{S_{1,1}}_p\nonumber\\
& &\leq C(m_1m_2)^{1/2}\wt\Delta_{2,p}(f)\label{eq:bound1}
\eqnen
for some constant $C$, by~\eqref{eq:PUW}.
Note that $m_i\geq 2$ also implies $n_i(s_i-r_i)>1$. Therefore, $m_i\leq n_i(s_i-r_i)  + 1<2n_i(s_i-r_i)$, and~\eqref{eq:bound1} can be bounded by $C(n_1n_2)^{1/2}[(s_1-r_1)(s_2-r_2)]^{1/2}\wt\Delta_{2,p}(f)$, which yields~\eqref{eq:bound}. 

In the case $m_1<2$ or $m_2<2$, to obtain~\eqref{eq:bound} requires more careful analysis. We only show the case when $m_1 = 1, m_2\geq 2$, as the proof for the other cases are similar. Suppose that $m_1 = 1$ and we exclude the case $n_1r_1 = \lfloor n_1r_1\rfloor = \lceil n_1r_1\rceil$ (it is easy to see that this case can be eventually controlled by continuity). Then, we have  $n_1r_1<\lceil n_1r_1\rceil = \lfloor n_1s_1\rfloor\leq  n_1s_1$. Then,
\begin{multline*}
\snn{B_n((r_1,s_1]\times(r_2,s_2])}_p \\
 \leq n_1(s_1-r_1)( \nn{S_{1,m_2}}_p  + 2\snn{S_{1,1}}_p)
 \leq Cn_1(s_1-r_1)m_2^{1/2}\wt\Delta_{2,p}(f)\,.
\end{multline*}
Observe that $m_1 = 1$ also implies $n_1(s_1-r_1)\in(0,2)$. If $n_1(s_1-r_1)\leq 1$, then $n_1(s_1-r_1)\leq [n_1(s_1-r_1)]^{1/2}$. If $n_1(s_1-r_1)\in (1,2)$, then $n_1(s_1-r_1)< \sqrt 2[n_1(s_1-r_1)]^{1/2}$. It then follows that~\eqref{eq:bound} still holds.
\end{proof}
\begin{Rem}
To prove the invariance principle of stationary random fields, most of the results require a finite moment of order strictly larger than 2. See for example \cite{berkes81strong},  \cite{goldie86variance} and  \cite{dedecker01exponential}. This is in contrast to the one-dimensional case, where the invariance principle can be established with finite second moment assumption.

To the best of our knowledge, the only invariance principle so far for stationary random fields that assumes finite second moment is due to  \cite{shashkin03invariance}, where the random fields are assumed to be $BL(\theta)$-dependent (including $m$-dependent stationary random fields). In general the $BL(\theta)$-dependence is difficult to check. Besides,  \cite{basu79functional} proved an invariance principle for martingale difference random fields with finite second moment assumption, but they have stringent conditions on the filtration (see Remark~\ref{rem:orthomartingale} below). In our case, it remains an open problem: whether $\wt\Delta_{2,2}(f)<\infty$ implies the invariance principle. See also a similar conjecture in  \cite{dedecker01exponential}, Remark 1.
\end{Rem}
\section{Orthomartingales}\label{sec:orthomartingales}
The central limit theorems and invariance principles for multiparameter martingales are more difficult to establish than in the one-dimensional case. This is due to the complex structure of multiparameter martingales. We will focus on orthomartingales first and establish an invariance principle, and then compare the results on other types of multiparameter martingales. 

The idea of orthomartingales are due to R.~Cairoli and J.~B.~Walsh. See e.g.~references in  \cite{khoshnevisan02multiparameter}, which also provides a nice introduction to the materials. For the sake of simplicity, we suppose $d=2$. 
Consider a probability space $\OAP$ and recall the definition of 2-parameter filtration~\eqref{eq:multiFiltration}. We restrict ourselves to the filtration indexed by $\mathbb N^2$.

\begin{Def}
Given a commuting 2-parameter filtration $\indijn\filF$ on $\OAP$, we say a family of random variables $\indijn M$ is a {\it 2-parameter orthomartingale} on $\OAP$, with respect to $\indijn\filF$, if for all $(i,j)\in\mathbb N^2$, $M_{i,j}$ is $\filF_{i,j}$-measurable, and $\esp(M_{i+1,j}\mid\filF_{i,\infty}) = \esp(M_{i,j+1}\mid\filF_{\infty,j}) = M_{i,j}$, almost surely. 
\end{Def}
In our case, for $\filF_{0,0}$-measurable function $f\in\calL_0^2$, $M_{m,n} = S_{m,n}(f)$ as in~\eqref{eq:Smn} yields a 2-parameter orthomartingale, if 
\equh\label{eq:orthomartingale}
\esp(f\circ T_{i+1,j}\mid\filF_{i,\infty}) = \esp(f\circ T_{i,j+1}\mid\filF_{\infty,j}) = 0\malmosts,
\eque
for all $(i,j)\in\N^2$. In this case, we say $\indijn{f\circ T}$ are {\it 2-parameter orthomartingale differences}.
\begin{Rem}
In our case, $\indijn M$ is also a {\it 2-parameter martingale} in the normal sense, i.e., $\esp(M_{i,j}\mid\filF_{k,l}) = M_{i\wedge k,j\wedge l}$, almost surely. Indeed, 
\[
\esp(M_{i,j}\mid\filF_{k,l}) = \esp[\esp(M_{i,j}\mid\filF_{k,\infty})\mid\filF_{\infty,l}] = \esp(M_{i\wedge k,j}\mid\filF_{\infty,l}) = M_{i\wedge k,j\wedge l}\,.
\]
In general, however, the converse is not true, i.e., multiparameter martingales are not necessarily orthomartingales (see e.g.~\cite{khoshnevisan02multiparameter} p.~33). The two notions are equivalent, when the filtration is commuting (see e.g.~\cite{khoshnevisan02multiparameter}, Chapter I, Theorem 3.5.1).
\end{Rem}

\begin{Thm}\label{thm:orthomartingale}
Consider a product probability space $\OAP$ with a natural filtration $\indijn\filF$. Suppose $f\in\calL_0^2$ and $f\in\filF_{0,0}$. If $\indijn{f\circ T}$ are 2-parameter orthomartingale differences, i.e.,~\eqref{eq:orthomartingale} holds, then $\sigma^2 = \limn \esp(S_n(f)^2)/|V_n|^2 <\infty$ exists, and 
\[
\frac{S_n(f)}{|V_n|^{1/2}}\weakto\sigma\calN(0,1)\,.
\]
In addition, if $f\in\calL_0^p$ for some $p>2$, then the invariance principle~\eqref{eq:IP} holds.
\end{Thm}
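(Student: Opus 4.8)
The plan is to deduce this statement directly from the central limit theorem (Theorem~\ref{thm:CLT}) and the invariance principle (Theorem~\ref{thm:IP}) already established, by verifying that the orthomartingale difference condition~\eqref{eq:orthomartingale} forces the projective quantity $\wt\Delta_{2,p}(f)$ to be finite. In fact I expect~\eqref{eq:orthomartingale} to do much more: it should make every term of the double series~\eqref{eq:wtDelta} vanish except the single diagonal term indexed by $(k,l)=(1,1)$, so that $\wt\Delta_{2,p}(f)$ degenerates to $\snnp f$.

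First I would record that, since $f\in\filF_{0,0}$, the relation~\eqref{eq:nested} gives that $f\circ T_{k,l}$ is $\filF_{k,l}$-measurable for every $(k,l)\in\N^2$; in particular $\esp(f\circ T_{1,1}\mid\filF_{1,1})=f\circ T_{1,1}$, whose $L^p$ norm equals $\snnp f$ by measure preservation. Next I would show that $\esp(f\circ T_{k,l}\mid\filF_{1,1})=0$ whenever $(k,l)\neq(1,1)$. For $k\geq2$, the condition~\eqref{eq:orthomartingale} invoked at $(i,j)=(k-1,l)$ gives $\esp(f\circ T_{k,l}\mid\filF_{k-1,\infty})=0$; since $\filF_{1,1}\subseteq\filF_{k-1,\infty}$, the tower property yields $\esp(f\circ T_{k,l}\mid\filF_{1,1})=0$. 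For $k=1,\ l\geq2$, the symmetric half of~\eqref{eq:orthomartingale} invoked at $(i,j)=(1,l-1)$ gives $\esp(f\circ T_{1,l}\mid\filF_{\infty,l-1})=0$, and $\filF_{1,1}\subseteq\filF_{\infty,l-1}$ again collapses the conditional expectation to zero. Consequently only the $(1,1)$ term survives in~\eqref{eq:wtDelta}, and $\wt\Delta_{2,p}(f)=\snnp f$, finite for $f\in\calL_0^2$ (with $p=2$) and for $f\in\calL_0^p$, $p>2$.

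With $\wt\Delta_{2,2}(f)=\snn f_2<\infty$ verified, the central limit theorem follows immediately from Theorem~\ref{thm:CLT}, and when $f\in\calL_0^p$ with $p>2$ the invariance principle~\eqref{eq:IP} follows from Theorem~\ref{thm:IP}. To identify the limiting variance explicitly, I would show that the summands are pairwise orthogonal: for $(i,j)\neq(i',j')$ with both in $V_n$, assuming say $i<i'$, the variable $f\circ T_{i,j}$ is $\filF_{i,j}$-measurable, hence $\filF_{i'-1,\infty}$-measurable, while $\esp(f\circ T_{i',j'}\mid\filF_{i'-1,\infty})=0$ by~\eqref{eq:orthomartingale}; the case $i=i',\ j<j'$ is handled symmetrically through the marginal filtration $\filF_{\infty,j'-1}$. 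Thus $\esp(S_n(f)^2)=|V_n|\,\esp(f^2)$ exactly, so the limit in the statement exists trivially and $\sigma^2=\esp(f^2)=\snn f_2^2$ (with the normalization $|V_n|$, consistent with Theorem~\ref{thm:1}).

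The computation is essentially routine; the only point demanding care is the bookkeeping of filtration inclusions and index shifts, i.e.\ checking that the indices $(k-1,l)$ and $(1,l-1)$ at which~\eqref{eq:orthomartingale} is invoked lie in the admissible range and that the marginal filtrations $\filF_{k-1,\infty}$ and $\filF_{\infty,l-1}$ contain $\filF_{1,1}$. I do not anticipate any substantive obstacle beyond this reduction, which is the whole content of the argument.
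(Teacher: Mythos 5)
Your proposal is correct and follows essentially the same route as the paper's own proof: show that the orthomartingale property~\eqref{eq:orthomartingale}, via the tower property through the marginal filtrations, kills every term of~\eqref{eq:wtDelta} except the $(1,1)$ term, so that $\wt\Delta_{2,p}(f)=\nn f_p<\infty$, and then invoke Theorems~\ref{thm:CLT} and~\ref{thm:IP}. Your additional orthogonality computation identifying $\sigma^2=\esp(f^2)$ exactly is a correct refinement that the paper does not spell out, and your normalization $|V_n|$ (rather than the $|V_n|^2$ appearing in the theorem statement, evidently a typo) is the right one.
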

\begin{proof}
Observe that,~\eqref{eq:orthomartingale} implies $\esp(f\circ T_{i,j}\mid\filF_{1,1}) = 0$ if $i>1$ or $j>1$. Then, for $f\in\calL_0^p, p\geq 2$,
\[
\wt\Delta_{\infty,p}(f) = \snn{\esp(f\circ T_{1,1}\mid\filF_{1,1})}_p = \nn f_p<\infty\,.
\]
The result then follows immediately from Theorem~\ref{thm:1}. Note that, the argument holds for general $d$-parameter orthomartingales ($d\geq 2$) defined in  \cite{khoshnevisan02multiparameter}.
\end{proof}
\begin{Rem}\label{rem:orthomartingale}
Our result is more general than   \cite{basu79functional},   \cite{nahapetian95billingsley} and   \cite{poghosyan98invariance} in the following sense. Let be $\indij\epsilon$ be i.i.d.~random variables. In  \cite{nahapetian95billingsley}, the central limit theorem was established for the so-called {\it martingale-difference random fields} $\indijn M$ with $M_{i,j} = \summ k1i\summ l1j D_{k,l}$, such that 
\[
\esp[D_{i,j}\mid\sigma(\epsilon_{k,l}:(k,l)\in\mathbb Z^2, (k,l)\neq (i,j))] = 0\,,\mfa (i,j)\in\mathbb N^2\,.
\]
In  \cite{basu79functional} and  \cite{poghosyan98invariance}, the authors considered the multiparameter martingales $\indijn M$ with respect to the filtration defined by
\[
\wt\filF_{i,j} = \sigma(\epsilon_{k,l}: k\leq i \mor l\leq j)\,.
\]
It is easy to see, in both cases above, their assumptions are stronger, in the sense that they imply that $\indijn M$ is an orthomartingale, with the natural filtration $\indijn\filF$~\eqref{eq:filFk}. On the other hand, however, the results in (  \cite{basu79functional,poghosyan98invariance}) only assume that $\indij\epsilon$ is a stationary random field, which is weaker than our assumption. 
\end{Rem}
\begin{Rem}
By assumption, the $\sigma$-algebra of $\{T_{i,j}\}_{(i,j)\in\mathbb Z^2}$-invariant sets is $\proba$ trivial. Therefore, our results are restricted to ergodic random fields, and exclude the following simple case:
\[
X_{i,j} = Y\epsilon_{i,j}, (i,j)\in\mathbb Z^2\,,
\]
where $Y$ is a random variable independent of $\{\epsilon_{i,j}\}_{(i,j)\in\mathbb Z^2}$. Clearly, if $\epsilon_{0,0}$ has zero mean and finite variance $\sigma^2$, then
\[
\frac1n\summ i1n\summ j1n X_{i,j}\weakto YZ\,,
\]
where $Z\sim\calN(0,\sigma^2)$ is independent of $Y$.
For central limit theorems on non-ergodic random fields, see for example  \cite{dedecker98central,dedecker01exponential}. 
\end{Rem}

At last, we point out that the product structure of the probability space plays an important role. We provide an example of an orthomartingale with a different underlying probability structure. In this case, the limit behavior is quite different from the case that we studied so far.
\begin{Example}\label{example:productRWs}
Suppose $\indz\epsilon$ and $\indz{\eta}$ are two families of i.i.d.~random variables. Define $\calG_i = \sigma(\epsilon_j:j\leq i)$ and $\calH_i = \sigma(\eta_j:j\leq i)$ for all $i\in\N$. Then, $\calG = \indn\calG$ and $\calH = \indn\calH$ are two filtrations. 

Now, let $\indn Y$ and $\indn Z$ be two arbitrary martingales with stationary increment with respect to the filtration $\calG$ and $\calH$, respectively. Suppose $Y_n = \sumin D_i, Z_n = \sumin E_i$, where $\indn D$ and $\indn E$ are stationary martingale differences. Then, $\{D_iE_j\}_{(i,j)\in\N^2}$ is a stationary random fields and 
\[
M_{m,n} \defe  \summ i1m\summ j1n D_iE_j = Y_mZ_n
\]
is an orthomartingale with respect to the filtration $\{ \calG_i\vee\calH_j\}_{(i,j)\in\mathbb N^2}$. Clearly, 
\[
\frac{M_{n,n}}n = \frac{Y_n}{\sqrt n}\frac{Z_n}{\sqrt n}\weakto \calN(0,\sigma_Y^2)\times\calN(0,\sigma_Z^2)\,,
\]
where the limit is the distribution of the product of two independent normal random variables (a Gaussian chaos). That is, $M_{n,n}/n$ has asymptotically non-normal distribution.

One can also define $\wt M_{m,n} = Y_m+Z_n$, which again gives an orthomartingale, and $\{D_i+E_j\}_{(i,j)\in\mathbb N^2}$ is the corresponding stationary random field. This time, one can show that 
\[
\frac{\wt M_{n,n}}{\sqrt n} = \frac{Y_n}{\sqrt n} + \frac{Z_n}{\sqrt n}\weakto \calN(0,\sigma_Y^2+\sigma_Z^2)\,.
\]
Here, the limit is a normal distribution, but the normalizing sequence is $\sqrt n$ instead of $n$.
\end{Example}
This example demonstrates that for general orthomartingales, to obtain a central limit theorem one must assume extra conditions on the structure of the underlying probability space. For the structure mentioned above, there is no $m$-dependent approximation for the random fields. Indeed, the example corresponds to the sample space $\Omega = (\mathbb R^{\mathbb Z},\mathbb R^{\mathbb Z})$ with $[T_{k,l}(\epsilon,\eta)]_{i,j} = (\epsilon_{i+k},\eta_{j+l})$, and if we define $f_m$ similarly as in~\eqref{eq:fm} with
\[
\calF_{\ip m} \defe \sigma(\epsilon_i,\eta_j:-m\leq i,j\leq m)\,,
\]
then $f$ and $f\circ T_{k,l}$ are independent, if and only if $\min(k,l)>m$. That is, the dependence can be very strong, along the horizontal (the vertical resp.) direction of the random field.

\section{Stationary Causal Linear Random Fields}\label{sec:LRF}
We establish a central limit theorem for functionals of stationary causal linear random fields.  We focus on $d=2$. Consider a stationary linear random field $\{Z_{i,j}\}_{(i,j)\in\mathbb Z^2}$ defined by
\equh\label{eq:Zij}
Z_{i,j} = \sumZ r\sumZ sa_{r,s}\epsilon_{i-r,j-s} = \sumZ r\sumZ sa_{i-r,j-s}\epsilon_{r,s}\,,
\eque
where coefficients $\{a_{i,j}\}_{(i,j)\in\mathbb Z^2}$ satisfy $\sum_{(i,j)\in\mathbb Z^2}a_{i,j}^2<\infty$, and $\{\epsilon_{i,j}\}_{(i,j)\in\mathbb Z^2}$ are i.i.d.~random variables with zero mean and finite variance as before. We restrict ourselves to {\it causal} linear random fields, i.e., $a_{i,j} = 0$ unless $i\geq 0$ and $j\geq 0$. They are also referred to be {\it adapted} to the filtration $\{\filF_{i,j}\}_{(i,j)\in\mathbb Z^2}$. 

Now, consider the random fields $\{f\circ T_{k,l}\}_{(k,l)\in\mathbb Z^2}$ with a more specific form $f = K(\{Z_{i,j}\}_h^{0,0})$, where $h$ is a fixed strictly positive integer, $K$ is a measurable function from $\mathbb R^{h^2}$ to $\mathbb R$ and for all $(k,l)\in\mathbb Z^2$,
\[
\{Z_{i,j}\}_h^{k,l} \defe \{Z_{i,j}:k-h+1\leq i\leq k, l-h+1\leq j\leq l\}
\]
is viewed as a random vector in $\mathbb R^{h^2}$ with covariates lexicographically ordered. In the sequel, the same definition applies similarly to $\{x_{i,j}\}_h^{k,l}$, given $\{x_{i,j}\}_{(i,j)\in\mathbb Z^2}$.
Assume that 
\equh\label{eq:K}
\esp K(\{Z_{i,j}\}_h^{0,0}) = 0\qmand \esp K^p(\{Z_{i,j}\}_h^{0,0}) <\infty
\eque
for some $p\geq 2$.
In this way, 
\equh\label{eq:Xkl}
f\circ T_{k,l} = K(\{Z_{i,j}\}_h^{k,l})\,.
\eque
The model~\eqref{eq:Xkl} is a natural extension of the functionals of causal linear processes considered by  \cite{wu02central}.

Next, we introduce a few notations similar to  \cite{ho97limit} and  \cite{wu02central}. Here, our ultimate goal is to translate Condition~\eqref{eq:wtDelta} into a condition on the regularity of $K$ and the summability of $\{a_{i,j}\}_{(i,j)\in\mathbb Z^2}$. For all $(i,j)\in\mathbb Z^2$, let
\equh\label{eq:Gammaij}
\Gamma(i,j) = \{(r,s)\in\mathbb Z^2:r\leq i,s\leq j\}\,,
\eque
and write
\eqnhn
Z_{i,j} & = & \sum_{(r,s)\in\Gamma(i,j)}a_{i-r,j-s}\epsilon_{r,s}\nonumber\\
& = & \sum_{(r,s)\in\Gamma(i,j)\setminus\Gamma(1,1)}a_{i-r,j-s}\epsilon_{r,s} + \sum_{(r,s)\in\Gamma(1,1)}a_{i-r,j-s}\epsilon_{r,s}\nonumber\\
& =: & Z_{i,j,+}+Z_{i,j,-}\,.\label{eq:Zij+-}
\eqnen
Write $W_{k,l,-} = \{Z_{i,j,-}\}_h^{k,l}$ and define, for all $(k,l)\in\mathbb Z^2$,
\[
K_{k,l}(\{x_{i,j}\}_h^{k,l}) = \esp K(\{Z_{i,j,+} + x_{i,j}\}_h^{k,l})\,.
\]
In this way,
\equh\label{eq:Kn1}
\esp\spp{f\circ T_{k,l}\mid\filF_{1,1}} = K_{k,l}(\{Z_{i,j,-}\}_h^{k,l}) =: K_{k,l}(W_{k,l,-})\,.
\eque
Plugging~\eqref{eq:Kn1} into~\eqref{eq:wtDelta}, we obtain a central limit theorem for functionals of stationary causal linear random fields.
\begin{Thm}
Consider the functionals of stationary causal linear random fields~\eqref{eq:Xkl}. If Conditions~\eqref{eq:K} hold and
\equh\label{cond:1}\sif k1\sif l1\frac{\nn {K_{k,l}(W_{k,l,-})}_p}{k^{1/2}l^{1/2}}<\infty\,,
\eque
for $p=2$, then $\sigma^2 = \limn\esp(S_n^2)/n^2 <\infty$ exists and $S_n/|V_n|^{1/2}\weakto \calN(0,\sigma^2)$. If the conditions hold with $p>2$, then the invariance principle~\eqref{eq:IP} holds.
\end{Thm}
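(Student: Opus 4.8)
The plan is to derive this theorem as a direct corollary of Theorem~\ref{thm:1} (equivalently, of Theorems~\ref{thm:CLT} and~\ref{thm:IP} in the case $d=2$). The whole purpose of the construction leading up to~\eqref{eq:Kn1} is to rewrite the projective quantity $\wt\Delta_{2,p}(f)$ that drives the main theorem in terms of the regularity of $K$ and the summability of the coefficients $\{a_{i,j}\}_{(i,j)\in\mathbb Z^2}$. So the strategy is simply to check that $f=K(\{Z_{i,j}\}_h^{0,0})$ meets every hypothesis of Theorem~\ref{thm:1} and that Condition~\eqref{cond:1} is literally the hypothesis $\wt\Delta_{2,p}(f)<\infty$.

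First I would verify the standing assumptions on $f$. Mean-zero and $L^p$-integrability, i.e.\ $f\in\calL_0^p$, are immediate from~\eqref{eq:K}. Adaptedness $f\in\filF_{0,0}$ follows from causality: since $a_{i,j}=0$ unless $i,j\geq 0$, each $Z_{i,j}$ depends only on $\{\epsilon_{r,s}:r\leq i,\,s\leq j\}$, so every coordinate of $\{Z_{i,j}\}_h^{0,0}$, indexed by $i,j\leq 0$, is $\filF_{0,0}$-measurable, and hence so is $f$. The product-space structure required by Theorem~\ref{thm:1} is already in force, the $\epsilon_{i,j}$ being i.i.d.

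Next I would establish the conditional-expectation identity~\eqref{eq:Kn1}, which is the heart of the matter. Using the splitting $Z_{i,j}=Z_{i,j,+}+Z_{i,j,-}$ of~\eqref{eq:Zij+-}, the component $Z_{i,j,-}$ is built only from $\{\epsilon_{r,s}:(r,s)\in\Gamma(1,1)\}$ and is therefore $\filF_{1,1}$-measurable, while $Z_{i,j,+}$ is built from the complementary i.i.d.~noise and is independent of $\filF_{1,1}$. Conditioning~\eqref{eq:Xkl} on $\filF_{1,1}$ then freezes the minus-part and integrates out the independent plus-part, yielding exactly $K_{k,l}(W_{k,l,-})$. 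With this identity in hand, the series $\wt\Delta_{2,p}(f)$ in~\eqref{eq:wtDelta} coincides term by term with the sum in~\eqref{cond:1}.

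It remains only to invoke the main result: for $p=2$, Condition~\eqref{cond:1} gives $\wt\Delta_{2,2}(f)<\infty$, so Theorem~\ref{thm:1} yields the existence of $\sigma^2$ together with the central limit theorem, while for $p>2$ it gives $\wt\Delta_{2,p}(f)<\infty$ and the invariance principle~\eqref{eq:IP} follows as well. I expect the only genuinely substantive step to be the identity~\eqref{eq:Kn1}; everything else is a matter of matching definitions. The cleanness of that step is itself a consequence of causality, which is precisely what lets the driving noise separate into an $\filF_{1,1}$-measurable part and an independent complement. Without causality one could not reduce the conditional expectation to the deterministic functional $K_{k,l}$, and the translation of~\eqref{cond:1} into a verifiable summability condition on $\{a_{i,j}\}_{(i,j)\in\mathbb Z^2}$ and a modulus of continuity of $K$ would break down.
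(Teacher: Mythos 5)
Your proposal is correct and follows exactly the paper's route: the paper proves this theorem precisely by verifying $f\in\calL_0^p$, $f\in\filF_{0,0}$ (from causality), establishing the identity~\eqref{eq:Kn1} via the decomposition $Z_{i,j}=Z_{i,j,+}+Z_{i,j,-}$ with the minus part $\filF_{1,1}$-measurable and the plus part independent of $\filF_{1,1}$, and then plugging~\eqref{eq:Kn1} into~\eqref{eq:wtDelta} so that Condition~\eqref{cond:1} becomes $\wt\Delta_{2,p}(f)<\infty$ and Theorem~\ref{thm:1} applies. You also correctly identified~\eqref{eq:Kn1} as the only substantive step, which is exactly how the paper treats it.
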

Next, we will provide conditions on $K$ and $\{a_{i,j}\}_{(i,j)\in\mathbb Z^2}$ such that~\eqref{cond:1} holds. For all $\Lambda\subset \mathbb Z^2$, write 
\equh\label{eq:ZLambda}
Z_\Lambda = \sum_{(i,j)\in\Lambda}a_{i,j}\epsilon_{-i,-j}\qmand A_\Lambda = \sum_{(i,j)\in\Lambda} a_{i,j}^2\,.
\eque
In particular, our conditions involves summations of $a_{i,j}$ over the following type of regions:
\[
\Lambda(k,l) \defe \{(i,j)\in\mathbb Z^2: i\geq k, j\geq l\}\,, (k,l)\in\mathbb Z^2.
\]
For the sake of simplicity, we write $A_{k,l} \equiv A_{\Lambda(k,l)}$. The following  lemma is a simple extension of Lemma 2, part (b) in  \cite{wu02central}. 
\begin{Lem}\label{lem:K}
Suppose that there exist $\alpha,\beta\in\mathbb R$ such that $0<\alpha\leq 1\leq \beta<\infty$ and $\esp(|\epsilon|^{2\beta})<\infty$. If
\equh\label{eq:M}
\esp M_{\alpha,\beta}^2(W_{1,1})<\infty\mwith M_{\alpha,\beta}(x) = \sup_{\substack{y\in\mathbb R^{h^2},\ y\neq x}}\frac{|K(x)-K(y)|}{|x-y|^\alpha + |x-y|^\beta}\,,
\eque
then, for all $p\geq 2$,
\equh\label{eq:Kkl2}
\snn{K_{k,l}(W_{k,l,-})}_p = O(A_{k+1-h,l+1-h}^{\alpha/2})\,.
\eque
\end{Lem}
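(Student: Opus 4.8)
The plan is to realize $K_{k,l}(W_{k,l,-})$ as a conditional expectation and to exploit the modulus of continuity $M_{\alpha,\beta}$ through a coupling argument. By~\eqref{eq:Kn1} we have $K_{k,l}(W_{k,l,-}) = \esp(f\circ T_{k,l}\mid\filF_{1,1})$, and by stationarity $\esp K_{k,l}(W_{k,l,-}) = \esp(f\circ T_{k,l}) = 0$. Let $\{\epsilon^*_{r,s}\}$ be an independent copy of the innovation field and let $Z^*_{i,j,-}$ be built from $\{\epsilon^*_{r,s}\}_{(r,s)\in\Gamma(1,1)}$ exactly as $Z_{i,j,-}$ is built from $\{\epsilon_{r,s}\}_{(r,s)\in\Gamma(1,1)}$, keeping the outer part $Z_{i,j,+}$ unchanged. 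Writing $W_{k,l} = \{Z_{i,j}\}_h^{k,l}$ and $W^\dagger_{k,l} = \{Z_{i,j,+}+Z^*_{i,j,-}\}_h^{k,l}$, the centering yields the identity $\esp(f\circ T_{k,l}\mid\filF_{1,1}) = \esp[K(W_{k,l}) - K(W^\dagger_{k,l})\mid\filF_{1,1}]$: indeed $K(W^\dagger_{k,l})$ is independent of $\filF_{1,1}$ and equal in law to $f\circ T_{k,l}$, so its conditional expectation is $\esp f = 0$. This is the structural heart of the argument.

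Next I would apply the conditional Jensen inequality and the definition of $M_{\alpha,\beta}$ to obtain the pointwise control
\[
\snn{K_{k,l}(W_{k,l,-})}_p \le \snn{M_{\alpha,\beta}(W_{k,l})\,\big(|D|^\alpha + |D|^\beta\big)}_p,\qquad D := W_{k,l,-}-W^*_{k,l,-},
\]
where $|\cdot|$ is the Euclidean norm on $\mathbb R^{h^2}$ and, by stationarity, $M_{\alpha,\beta}(W_{k,l})$ has the law of $M_{\alpha,\beta}(W_{1,1})$. The coupling difference $D$ is a sum of independent, mean-zero terms $a_{i-r,j-s}(\epsilon_{r,s}-\epsilon^*_{r,s})$ with $(r,s)\in\Gamma(1,1)$, so its second moment is governed by the tail mass of the coefficients: summing the $h^2$ coordinate variances gives $\esp|D|^2 = O(A_{k+1-h,l+1-h})$. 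For the higher power I would invoke Rosenthal's inequality together with $\esp|\epsilon|^{2\beta}<\infty$ and the elementary bound $\max |a_{i,j}|^2\le A_{k+1-h,l+1-h}$ over the relevant indices to deduce $\esp|D|^{2\beta} = O(A_{k+1-h,l+1-h}^{\,\beta})$.

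Finally I would separate the envelope $M_{\alpha,\beta}$ from the increment $|D|$ by Hölder's inequality, choosing the conjugate exponents so that the moment hypothesis on $M_{\alpha,\beta}$ and the innovation moment $\esp|\epsilon|^{2\beta}$ (at the level $p$ in the $L^p$ statement) are exactly the ones consumed. This turns $\esp|D|^2 = O(A_{k+1-h,l+1-h})$ into an $A_{k+1-h,l+1-h}^{\,\alpha/2}$ contribution from the $|D|^\alpha$ term, while the $|D|^\beta$ term produces $A_{k+1-h,l+1-h}^{\,\beta/2}$, which is of smaller order since $\beta\ge 1\ge\alpha$ and $A_{k+1-h,l+1-h}\to 0$; the two combine to $O(A_{k+1-h,l+1-h}^{\,\alpha/2})$, as claimed. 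The main obstacle is precisely this last step: because $M_{\alpha,\beta}(W_{k,l})$ and $D$ both depend on the inner innovations they are \emph{not} independent, so the sharp rate cannot be read off from $\esp|D|^2$ alone but must be extracted through a Hölder split whose exponents have to be balanced against the available moments. Keeping the $|D|^\alpha$ term (local Hölder regularity) and the $|D|^\beta$ term (polynomial growth at infinity) under simultaneous control is exactly where the two-sided condition $0<\alpha\le 1\le\beta$ and the assumption $\esp|\epsilon|^{2\beta}<\infty$ earn their keep.
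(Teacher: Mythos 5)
Your setup coincides with the paper's: your coupling $W^\dagger_{k,l}$ is exactly the paper's $\wt W_{k,l}$ (replace the inner innovations by an independent copy, keep $Z_{i,j,+}$), the centering identity $K_{k,l}(W_{k,l,-}) = \esp[K(W_{k,l})-K(W^\dagger_{k,l})\mid\filF_{1,1}]$ is the same, and your moment bound $\esp|D|^{2\gamma}=O(A_{k+1-h,l+1-h}^{\gamma})$ for the coupling increment matches the paper's (which cites Lemma 4 of Wu (2002) where you invoke Rosenthal). The gap is in your final step, and as written it is fatal. After pulling the $L^p$ norm inside the conditional expectation you face $\snn{M_{\alpha,\beta}(W_{k,l})(|D|^\alpha+|D|^\beta)}_p$ and propose an unconditional H\"older split. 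But H\"older gives $\snn{MV}_p\leq\snn{M}_r\snn{V}_s$ only for $1/r+1/s=1/p$, which forces $r\geq p\geq 2$, with $r=2$ possible only when $p=2$ and $s=\infty$; since $|D|^\alpha+|D|^\beta$ is unbounded and the hypothesis supplies nothing beyond $\esp M_{\alpha,\beta}^2(W_{1,1})<\infty$, no admissible pair of exponents exists --- neither for $p=2$ nor for $p>2$. There is nothing to ``balance'': the integrability of $M_{\alpha,\beta}$ that your split would consume is simply not assumed.

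The paper's resolution differs in two small but decisive ways. First, it bounds $|K(W_{k,l})-K(\wt W_{k,l})|$ using $M_{\alpha,\beta}$ evaluated at the coupled point $\wt W_{k,l}$ --- which is independent of $\filF_{1,1}$ --- rather than at $W_{k,l}$ (both are legitimate pointwise bounds, by the symmetry of $|K(x)-K(y)|$ in the definition of $M_{\alpha,\beta}$, but only one of them is useful). Second, it applies Cauchy--Schwarz \emph{conditionally on $\filF_{1,1}$}, inside the conditional expectation defining $K_{k,l}$, before any $L^p$ norm is taken. Because $\wt W_{k,l}$ is independent of $\filF_{1,1}$, the factor $[\esp(M_{\alpha,\beta}^2(\wt W_{k,l})\mid\filF_{1,1})]^{1/2}$ is the deterministic constant $\snn{M_{\alpha,\beta}(W_{1,1})}_2$; the outer $L^p$ norm then falls only on $\{\esp[(|U_{k,l}|^\alpha+|U_{k,l}|^\beta)^2\mid\filF_{1,1}]\}^{1/2}$, which conditional Jensen (with $x\mapsto x^{p/2}$, $p\geq 2$) bounds by $\snn{|U_{k,l}|^\alpha}_p+\snn{|U_{k,l}|^\beta}_p$. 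In other words, the dependence between the envelope and the increment is neutralized not by H\"older but by arranging that the envelope's conditional second moment is constant; that is the idea your proposal is missing, and without it the stated bound cannot be reached from $M_{\alpha,\beta}\in L^2$ alone.
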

The proof is deferred to Section~\ref{sec:proofs}.
Consequently, Condition~\eqref{cond:1} can be replaced by specific ones on $A_{k,l}$.
\begin{Coro}
Assume there exist $\alpha,\beta\in\mathbb R$ as in Lemma~\ref{lem:K}. 
Consider the functionals of stationary linear random fields in form of~\eqref{eq:Xkl}. Suppose Condition~\eqref{eq:M} holds and
\equh\label{cond:2}
\sif k1\sif l1\frac{A_{k+1-h,l+1-h}^{\alpha/2}}{k^{1/2}l^{1/2}}<\infty\,.
\eque
If $\esp(|\epsilon|^p)<\infty$ and~\eqref{eq:K} hold with $p=2$, then $S_n/n\weakto\calN(0,\sigma^2)$ with some $\sigma<\infty$. If $\esp(|\epsilon|^p)<\infty$ and~\eqref{eq:K} holds with $p>2$, then the invariance principle~\eqref{eq:IP} holds.
\end{Coro}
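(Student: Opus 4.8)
The plan is to read the corollary off from Lemma~\ref{lem:K} together with the preceding theorem, Condition~\eqref{eq:M} entering only through the estimate~\eqref{eq:Kkl2}. The guiding idea is that~\eqref{eq:Kkl2} converts the $L^p$ norms $\snn{K_{k,l}(W_{k,l,-})}_p$ appearing in the projective Condition~\eqref{cond:1} into the explicit quantities $A_{k+1-h,l+1-h}^{\alpha/2}$, so that~\eqref{cond:1} reduces to the concrete summability hypothesis~\eqref{cond:2}.

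First I would check that the hypotheses of Lemma~\ref{lem:K} are in force: the corollary supplies $0<\alpha\le 1\le\beta<\infty$ with $\esp(|\epsilon|^{2\beta})<\infty$, and Condition~\eqref{eq:M} is assumed, so~\eqref{eq:Kkl2} is available for the exponent $p$ at hand, the moment assumption $\esp(|\epsilon|^p)<\infty$ being exactly what permits the lemma to be applied with that $p$. The crucial point is that the constant hidden in the $O(\cdot)$ of~\eqref{eq:Kkl2} is uniform in $(k,l)$, so there is a single $C<\infty$, independent of $(k,l)$, with $\snn{K_{k,l}(W_{k,l,-})}_p\le C\,A_{k+1-h,l+1-h}^{\alpha/2}$ for all $(k,l)$. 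Dividing by $k^{1/2}l^{1/2}$ and summing, Condition~\eqref{cond:2} then gives
\[
\sif k1\sif l1\frac{\snn{K_{k,l}(W_{k,l,-})}_p}{k^{1/2}l^{1/2}}\le C\sif k1\sif l1\frac{A_{k+1-h,l+1-h}^{\alpha/2}}{k^{1/2}l^{1/2}}<\infty,
\]
which is exactly Condition~\eqref{cond:1}.

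It then remains to invoke the preceding theorem. By~\eqref{eq:Kn1} we have $\esp(f\circ T_{k,l}\mid\filF_{1,1})=K_{k,l}(W_{k,l,-})$, so the left-hand sum above is the quantity $\wt\Delta_{2,p}(f)$ of~\eqref{eq:wtDelta}; combined with~\eqref{eq:K}, which guarantees $f\in\calL_0^p$, the theorem yields $S_n/|V_n|^{1/2}\weakto\calN(0,\sigma^2)$ when $p=2$ and the invariance principle~\eqref{eq:IP} when $p>2$. Since all the real analytic work is carried out in Lemma~\ref{lem:K} (whose proof is deferred), the corollary itself is short, and the only matters requiring care are the uniformity in $(k,l)$ of the $O$-constant, so that it may be pulled outside the double sum, and the bookkeeping of the two moment hypotheses $\esp(|\epsilon|^{2\beta})<\infty$ and $\esp(|\epsilon|^p)<\infty$, which must both be carried along---and tracked with $p$ in place of $2$---in the invariance-principle regime $p>2$.
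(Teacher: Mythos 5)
Your proposal is correct and is essentially the paper's own (implicit) argument: the paper states the corollary as an immediate consequence of Lemma~\ref{lem:K}, whose uniform-in-$(k,l)$ bound $\snn{K_{k,l}(W_{k,l,-})}_p \le C A_{k+1-h,l+1-h}^{\alpha/2}$ converts Condition~\eqref{cond:2} into Condition~\eqref{cond:1}, which via the identity~\eqref{eq:Kn1} is exactly $\wt\Delta_{2,p}(f)<\infty$, so the preceding theorem yields the CLT for $p=2$ and the invariance principle for $p>2$. Your only loose phrasing is the claim that $\esp(|\epsilon|^p)<\infty$ is ``exactly what permits the lemma to be applied with that $p$''---the lemma's stated moment hypothesis is $\esp(|\epsilon|^{2\beta})<\infty$, and the additional assumption $\esp(|\epsilon|^p)<\infty$ serves the surrounding moment bookkeeping rather than the lemma itself---but this does not affect the validity of the deduction.
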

We compare our Condition~\eqref{cond:2} on the summability of $\{a_{i,j}\}_{(i,j)\in\mathbb Z^2}$, and the one considered by  \cite{cheng06central}. They only established central limit theorems for functionals of  stationary linear random fields, so we restrict to the case $p=2$.   \cite{cheng06central} assumed
\equh\label{eq:aij1/2}
\sif i0\sif j0 |a_{i,j}|^{1/2}<\infty\,,
\eque
and provided different regularity conditions on $K$. Namely, 
\[
\sup_{\Lambda\subset\mathbb Z^2}\esp K^2(x + Z_\Lambda)<\infty
\]
for all $x\in\mathbb R$ with $Z_\Lambda$ defined in~\eqref{eq:ZLambda}, and that for any two independent random variables $X$ and $Y$ with $\esp(K^2(X) + K^2(Y) + K^2(X+Y))<M<\infty$, 
\equh\label{eq:K@cheng06}
\esp[(K(X+Y)-K(X))^2]\leq C[\esp (Y^2)]^\gamma
\eque
for some $\gamma\geq 1/2$. 
In general,   \cite{cheng06central}'s condition and ours on the regularity $K$ are not comparable and thus have different range of applications. Below, we focus on the simple case that $h = 1$ and $K$ is Lipschitz, covered by both works. This corresponds to $\alpha = \beta = 1$ in~\eqref{eq:M} and $\gamma = 1$ in~\eqref{eq:K@cheng06}.
In the following two examples, our Condition~\eqref{cond:2} is weaker than Condition~\eqref{eq:aij1/2}. 
\begin{Example}\label{example:1}
Consider $a_{i,j} = (i+j+1)^{-q}$ for all $i,j\geq 0$ and some $q>1$. Then, $A = \sif i0\sif j0 a_{i,j}^2<\infty$ and
\[
A_{k,l} = \sif j1 j(k+l+j)^{-2q} = O((k+l)^{2-2q})\,.
\]
Then~\eqref{cond:2} is bounded by, up to a multiplicative constant,
\[
\sif k1\sif l1 \frac{(k+l)^{1-q}}{k^{1/2}l^{1/2}} < \sif k1 \frac{k^{(1-q)/2}}{k^{1/2}}\sif l1 \frac{l^{(1-q)/2}}{l^{1/2}} \leq \bpp{\sif k1 k^{-q/2}}^2\,.
\]
Therefore, Condition~\eqref{cond:2} requires $q>2$.
In this case, Condition~\eqref{eq:aij1/2} requires $q>4$.
\end{Example}
\begin{Example}\label{example:2}
Consider $a_{i,j} = (i+1)^{-q}(j+1)^{-q}$, for all $i,j\geq 0$ for some $q>1$. Then, $A = \sif i0\sif j0 a_{i,j}^2<\infty$ and
\equh
A_{k,l} = \sif ik\sif jl a_{i,j}^2 =  O(k^{-(2q-1)}l^{-(2q-1)})\,.
\eque
One can thus check that Condition~\eqref{cond:2} requires $q>3/2$ while Condition~\eqref{eq:aij1/2} requires $q>2$.
\end{Example}
\begin{Rem}\label{rem:EVW}
For the central limit theorem for functionals of linear random fields, the weakest condition known is due to  \cite{elmachkouri11central} (Example 1 and Theorem 1), who showed that it suffices to require $K$ to be Lipschitz and 
\[
\sum_{i,j}|a_{i,j}|<\infty\,.
\]
Furthermore, their result and the one by  \cite{cheng06central} do not assume the linear random field to be causal.
\end{Rem}
\section{A Moment Inequality}\label{sec:momentInequality}
We establish a moment inequality for stationary 2-parameter random fields on general probability spaces, without assuming the product structure.
We first review the Peligrad--Utev inequality, a maximal $L^p$-inequality in dimension one, with $p\geq 2$.  Recall the partial summation in~\eqref{eq:Snf1} and the related probability space. Let $C$ denote a constant that may change from line to line.
It is known that for all $f\in L^p(\filF_\infty)$ with $\esp (f\mid\filF_{-\infty}) = 0$, 
\begin{multline}\label{eq:Volny}
\bnn{\max_{1\leq k\leq n}|S_k(f)|}_p \leq Cn^{1/2}\Bigg(\nn{\esp(f\mid\filF_0)}_p + \nn {f - \esp(f\mid\filF_0)}_p  \\
+ \summ k1n\frac{\nn{\esp(S_k(f)\mid\filF_{0})}_p}{k^{3/2}} +\summ k1n\frac{\nn{S_k(f) - \esp(S_k(f)\mid\filF_{k})}_p}{k^{3/2}}\Bigg)\,.
\end{multline}
The inequality above was first established for adapted stationary sequences in  \cite{peligrad05new} and then extended to $L^p$-inequality for $p\geq 2$ in  \cite{peligrad07maximal}. The  case $p\in(1,2)$ was addressed by  \cite{wu08moderate}. The non-adapted case for $p\geq 2$ was addressed by  \cite{volny07nonadapted}.

For the sake of simplicity, we simplify the bound in~\eqref{eq:Volny} by regrouping the summations. Observe that $\snn{\esp(S_k(f)\mid\filF_0)}_p\leq \snn{\esp(S_k(f)\mid\filF_1)}_p$, $\nn{\esp(f\mid\filF_0)}_p = \nn{\esp(S_1(f)\mid\filF_1)}_p$ and $\snn{f-\esp(f\mid\filF_0)}_p = \snn{S_1(f)-\esp(S_1(f)\mid\filF_1)}_p$. Thus, we obtain
\begin{multline}\label{eq:Volny2}
\bnn{\max_{1\leq k\leq n}|S_k(f)|}_p \\\leq Cn^{1/2}\Bigg(\summ k1{n}\frac{\nn{\esp(S_k(f)\mid\filF_{1})}_p}{k^{3/2}} +\summ k1{n}\frac{\nn{S_k(f) - \esp(S_k(f)\mid\filF_{k})}_p}{k^{3/2}}\Bigg)\,.
\end{multline}

Now, consider a general probability space $\OAP$, and suppose there exists a commuting 2-parameter filtration $\indzz\filF$, and an Abelian group of bimeasurable, measure-preserving, one-to-one and onto maps $\indij T$ on $\OAP$, such that~\eqref{eq:nested} holds. Define $\filF_{\infty,\infty} = \bigvee_{(i,j)\in\mathbb Z^2}\filF_{i,j}$, $\filF_{-\infty,\infty} = \bigcap_{i\in\mathbb Z}\filF_{i,\infty}$ and $\filF_{\infty,-\infty} = \bigcap_{j\in\mathbb Z}\filF_{\infty,j}$. Note that when $\OAP$ is a product probability space, then $\filF_{-\infty,\infty}$ and $\filF_{\infty,-\infty}$ are trivial, by Kolmogorov's zero-one law. 

Recall the definition of $S_{m,n}(f)$ in~\eqref{eq:Smn}. 
Given $f$, write $S_{m,n} \equiv S_{m,n}(f)$ for the sake of simplicity. 
\begin{Prop}\label{prop:momentInequality}
Consider $\OAP$, $\indij T$ and $\indij\filF$ described as above. Suppose $p\geq 2$, $f\in L^p(\filF_{\infty,\infty})$ and $\esp(f\mid\filF_{-\infty,\infty}) = \esp(f\mid\filF_{\infty,-\infty}) = 0$. Then,
\[
\snn{S_{m,n}}_p \leq Cm^{1/2}n^{1/2}\summ k1{m}\summ l1{n}\frac{d_{k,l}(f)}{k^{3/2}l^{3/2}}
\]
with
\eqnh
d_{k,l}(f) & = & \snn{\esp(S_{k,l}\mid\filF_{1,1})}_p\\
& & + \snn{\esp(S_{k,l}\mid\filF_{1,\infty}) - \esp(S_{k,l}\mid\filF_{1,l})}_p \\
& & + \snn{\esp(S_{k,l}\mid\filF_{\infty,1}) - \esp(S_{k,l}\mid\filF_{k,1})}_p\\
& & + \snn{S_{k,l} - \esp(S_{k,l}\mid\filF_{k,\infty}) - \esp(S_{k,l}\mid\filF_{\infty,l}) + \esp(S_{k,l}\mid\filF_{k,l})}_p\,.
\eqne
\end{Prop}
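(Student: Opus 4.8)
The plan is to obtain the bound by applying the one-dimensional Peligrad--Utev maximal inequality~\eqref{eq:Volny2} twice, once in each coordinate direction, and then to use the commuting structure of $\indzz\filF$ to collapse the resulting nested conditional expectations into the four terms that make up $d_{k,l}(f)$.

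\emph{Reduction to one dimension in the first coordinate.} Since $\indij T$ is Abelian, setting $g=\summ j1n f\circ T_{0,j}$ gives $g\circ T_{i,0}=\summ j1n f\circ T_{i,j}$, so that $S_{m,n}=\summ i1m g\circ T_{i,0}$ is a one-dimensional partial sum of length $m$ with base $g$. The natural underlying filtration here is the marginal family $\{\filF_{i,\infty}\}_{i\in\mathbb Z}$, which is shift-covariant by~\eqref{eq:nested}. First I would check the hypotheses of~\eqref{eq:Volny2}: clearly $g\in L^p(\filF_{\infty,\infty})$, and $\esp(g\mid\filF_{-\infty,\infty})=0$ follows from $\esp(f\mid\filF_{-\infty,\infty})=0$ together with the invariance of $\filF_{-\infty,\infty}$ under vertical shifts. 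Applying~\eqref{eq:Volny2} then yields
\[
\snn{S_{m,n}}_p\le Cm^{1/2}\summ k1m\frac{A_k+B_k}{k^{3/2}},\qquad A_k=\snn{\esp(S_{k,n}\mid\filF_{1,\infty})}_p,\ B_k=\snn{S_{k,n}-\esp(S_{k,n}\mid\filF_{k,\infty})}_p.
\]

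\emph{Second application, in the second coordinate.} The key observation is that each of $A_k$ and $B_k$ is again the $L^p$-norm of a vertical one-dimensional partial sum. Writing $w=\summ i1k f\circ T_{i,0}$, one has $\esp(S_{k,n}\mid\filF_{1,\infty})=\summ j1n w'\circ T_{0,j}$ with $w'=\esp(w\mid\filF_{1,\infty})$, and $S_{k,n}-\esp(S_{k,n}\mid\filF_{k,\infty})=\summ j1n\bar w\circ T_{0,j}$ with $\bar w=w-\esp(w\mid\filF_{k,\infty})$; both identities rely on the equivariance $\esp(Y\circ T_{0,j}\mid\filF_{\cdot,\infty})=\esp(Y\mid\filF_{\cdot,\infty})\circ T_{0,j}$, valid because $T_{0,j}$ fixes the marginal $\sigma$-algebras. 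I would then apply~\eqref{eq:Volny2} vertically to $A_k$ using the filtration $\{\filF_{1,j}\}_{j}$ (whose top $\sigma$-algebra is $\filF_{1,\infty}$, so $w'$ is automatically adapted to the top) and to $B_k$ using the full marginal filtration $\{\filF_{\infty,j}\}_{j}$, after verifying in each case that the base function has vanishing conditional expectation given the vertical tail $\bigcap_j\filF_{1,j}$, resp.\ $\filF_{\infty,-\infty}$ (again a consequence of $\esp(f\mid\filF_{\infty,-\infty})=0$ and shift-invariance).

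\emph{Collapsing the nested conditional expectations and the main obstacle.} The terms produced by the second application involve iterated conditional expectations such as $\esp(\esp(S_{k,l}\mid\filF_{k,\infty})\mid\filF_{\infty,l})$; by the commuting relation~\eqref{eq:marginalCommuting} (Definition~\ref{def:commuting}) these reduce to $\esp(S_{k,l}\mid\filF_{k,l})$, while the tower property handles the remaining ones. Matching term by term, $A_k$ contributes the first two summands of $d_{k,l}(f)$ and $B_k$ the last two, giving $A_k+B_k\le Cn^{1/2}\summ l1n l^{-3/2}d_{k,l}(f)$; substituting into the first display completes the proof. I expect the main obstacle to be precisely this bookkeeping of conditional expectations: one must justify the shift-equivariance of conditioning on the marginal filtrations, verify the mean-zero hypothesis for every one-dimensional application, and carry out the commuting collapses carefully, since it is exactly these manipulations that convert the generic Peligrad--Utev output into the four structured terms defining $d_{k,l}(f)$.
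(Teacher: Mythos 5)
Your proposal is correct and follows essentially the same route as the paper's proof: a double application of the Peligrad--Utev inequality~\eqref{eq:Volny2}, first in the horizontal direction with the marginal filtration $\{\filF_{i,\infty}\}_{i\in\mathbb Z}$, then vertically to each of the two resulting terms, with the nested conditional expectations collapsed into the four summands of $d_{k,l}(f)$. The only (harmless) deviation is that for the term $A_k$ you condition along $\{\filF_{1,j}\}_{j}$, so that the tower property alone suffices, whereas the paper applies the vertical inequality with $\{\filF_{\infty,j}\}_{j}$ and then invokes the commuting relation~\eqref{eq:marginalCommuting} there as well.
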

\begin{Coro}\label{coro:1}
Suppose the assumptions in Proposition~\ref{prop:momentInequality} hold.
\begin{itemize}
\item [(i)] If $f\in \filF_{0,0}$, then
\[
\snn{S_{m,n}(f)}_p \leq Cm^{1/2}n^{1/2}\summ k1{m}\summ l1{n}\frac{\snn{\esp(S_{k,l}(f)\mid\filF_{1,1})}_p}{k^{3/2}l^{3/2}}\,.
\]
\item[(ii)] If $\{f\circ T_{i,j}\}_{(i,j)\in\mathbb Z^2}$ are two-dimensional martingale differences, in the sense that $f\in L^p(\filF_{0,0})$ and $\esp(f\mid\filF_{0,-1}) = \esp(f\mid\filF_{-1,0}) = 0$, then
\[
\snn{S_{m,n}(f)}_p\leq Cm^{1/2}n^{1/2}\nn f_p\,.
\]
\end{itemize}
\end{Coro}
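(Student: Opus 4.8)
The plan is to obtain both parts directly from Proposition~\ref{prop:momentInequality} by checking that the stated hypotheses force the last three summands of $d_{k,l}(f)$ to vanish (for (i)), and then that the surviving first summand collapses to $\nn f_p$ (for (ii)). The one structural fact I would isolate first is that $f\in\filF_{0,0}$ makes each $f\circ T_{i,j}$ an $\filF_{i,j}$-measurable variable: writing $f = g(\epsilon_{a,b}: a\leq 0, b\leq 0)$ and using $(T_{i,j}\omega)_{a,b} = \omega_{a+i,b+j}$ (equivalently~\eqref{eq:nested}), one gets $f\circ T_{i,j} = g(\epsilon_{c,d}: c\leq i, d\leq j)$, so $S_{k,l}$ from~\eqref{eq:Smn} is $\filF_{k,l}$-measurable.

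For part (i) I would feed $Y = S_{k,l}\in L^p\subseteq L^1$, which is $\filF_{k,l}$-measurable, into the four terms of $d_{k,l}(f)$. The fourth term vanishes at once: since $\filF_{k,l}\subseteq\filF_{k,\infty}$ and $\filF_{k,l}\subseteq\filF_{\infty,l}$, each of $\esp(S_{k,l}\mid\filF_{k,\infty})$, $\esp(S_{k,l}\mid\filF_{\infty,l})$, $\esp(S_{k,l}\mid\filF_{k,l})$ equals $S_{k,l}$, so the alternating combination is $0$. The third term vanishes by the marginal commuting identity~\eqref{eq:marginalCommuting}: taking $i=k, j=1$ there and using $\esp(S_{k,l}\mid\filF_{k,\infty}) = S_{k,l}$ gives $\esp(S_{k,l}\mid\filF_{\infty,1}) = \esp(S_{k,l}\mid\filF_{k,1})$. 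The symmetric computation, conditioning an $\filF_{\infty,l}$-expectation onto $\filF_{1,\infty}$ via the other equality in~\eqref{eq:marginalCommuting} with $i=1, j=l$ and $\esp(S_{k,l}\mid\filF_{\infty,l}) = S_{k,l}$, gives $\esp(S_{k,l}\mid\filF_{1,\infty}) = \esp(S_{k,l}\mid\filF_{1,l})$, killing the second term. Thus $d_{k,l}(f) = \snn{\esp(S_{k,l}\mid\filF_{1,1})}_p$, and Proposition~\ref{prop:momentInequality} reduces exactly to the claimed inequality.

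For part (ii), part (i) applies since $f\in L^p(\filF_{0,0})$, so it remains to evaluate $\esp(S_{k,l}\mid\filF_{1,1})$. I would first transport the difference hypotheses by the shift: since $T$ is measure-preserving and $\filF_{i,j-1} = T_{i,j}^{-1}\filF_{0,-1}$, $\filF_{i-1,j} = T_{i,j}^{-1}\filF_{-1,0}$, the relation $\esp(g\circ T\mid T^{-1}\calG) = \esp(g\mid\calG)\circ T$ turns $\esp(f\mid\filF_{0,-1}) = \esp(f\mid\filF_{-1,0}) = 0$ into $\esp(f\circ T_{i,j}\mid\filF_{i,j-1}) = \esp(f\circ T_{i,j}\mid\filF_{i-1,j}) = 0$. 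Then a tower-property argument shows $\esp(f\circ T_{i,j}\mid\filF_{1,1}) = 0$ whenever $(i,j)\neq(1,1)$: if $i\geq 2$ then $\filF_{1,1}\subseteq\filF_{i-1,j}$, and if $j\geq 2$ then $\filF_{1,1}\subseteq\filF_{i,j-1}$, so conditioning the vanishing inner expectation further yields $0$; the term $(1,1)$ contributes $f\circ T_{1,1}$, which is $\filF_{1,1}$-measurable. Hence $\esp(S_{k,l}\mid\filF_{1,1}) = f\circ T_{1,1}$ for every $k,l\geq 1$, with $\snn{\esp(S_{k,l}\mid\filF_{1,1})}_p = \nn f_p$ by stationarity. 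Substituting into part (i) and using $\summ k1m k^{-3/2}\summ l1n l^{-3/2}\leq (\sum_{k\geq 1}k^{-3/2})^2<\infty$ absorbs the double sum into $C$ and gives $\snn{S_{m,n}(f)}_p\leq Cm^{1/2}n^{1/2}\nn f_p$.

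The only point requiring genuine care — the one I would flag as the main obstacle — is the legitimacy of applying the commuting identities to the unbounded variable $S_{k,l}$ rather than to a bounded $Y$ as in Definition~\ref{def:commuting}; this is resolved because~\eqref{eq:marginalCommuting} is already asserted for all $Y\in L^1$, and $S_{k,l}\in L^p\subseteq L^1$. Everything else is bookkeeping: the shift-covariance of the filtration~\eqref{eq:nested}, the inclusions among the $\filF$'s, and the convergence of $\sum_{k\geq 1} k^{-3/2}$.
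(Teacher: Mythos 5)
Your proof is correct and follows exactly the route the paper intends: the paper declares the proof of Corollary~\ref{coro:1} trivial, and your argument is precisely the omitted specialization of Proposition~\ref{prop:momentInequality} — adaptedness ($f\in\filF_{0,0}$) makes $S_{k,l}$ $\filF_{k,l}$-measurable so that, via the commuting identity~\eqref{eq:marginalCommuting} applied to $Y\in L^1$, the last three terms of $d_{k,l}(f)$ vanish, and in (ii) the shifted martingale-difference relations collapse $\esp(S_{k,l}\mid\filF_{1,1})$ to $f\circ T_{1,1}$, after which $\sum_{k\geq1}k^{-3/2}\sum_{l\geq1}l^{-3/2}<\infty$ absorbs the double sum into the constant. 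No gaps; your flagged point about unbounded $S_{k,l}$ is handled correctly since~\eqref{eq:marginalCommuting} holds for all integrable $Y$.
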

The proof of Corollary~\ref{coro:1} is trivial. We only remark that the second case recovers the Burkholder's inequality for multiparameter martingale differences established in  \cite{fazekas05burkholder}.
\begin{proof}[Proof of Proposition~\ref{prop:momentInequality}]
Fix $f$. Define $\wt S_{0,n} = \summ j1n f\circ T_{0,j}$. Clearly,
\equh\label{eq:S0n}
S_{m,n} = \summ i1m\summ j1n f\circ T_{i,j} = \summ i1m\bpp{\summ j1n f\circ T_{0,j}}\circ T_{i,0} =  \summ i1{m}\wt S_{0,n}\circ T_{i,0}\,.
\eque
Fix $n$. Observe that $\esp \wt S_{0,n}=0$ and $\wt S_{0,n}\circ T_{i,0}$ is a stationary sequence. Furthermore, $\{\filF_{i,\infty}\}_{i\in\mathbb Z}$ is a filtration, $T_{i,0}\inv\filF_{j,\infty} = T_{-i,0}\filF_{j,\infty} = \filF_{i+j,\infty}$ and $\esp(\wt S_{0,n}\mid\filF_{-\infty,\infty}) = 0$. Therefore, we can apply the Peligrad--Utev inequality~\eqref{eq:Volny2} and obtain
\eqnhn
\snn{S_{m,n}}_p & \leq & Cm^{1/2}\Big(\summ k1{m}k^{-3/2}\underbrace{\snn{\esp(S_{k,n}\mid\filF_{1,\infty})}_p}_{\Lambda_1}\nonumber\\
& & \quad\quad\quad + \summ k1{m}k^{-3/2}\underbrace{\snn{S_{k,n}-\esp(S_{k,n}\mid\filF_{k,\infty})}_p}_{\Lambda_2}\Big)\,.\label{eq:11}
\eqnen
We first deal with $\Lambda_1$. Define $\wt S_{m,0} = \summ i1m f\circ T_{i,0}$. Similarly as in~\eqref{eq:S0n}, $S_{k,n} = \summ j1n \wt S_{k,0}\circ T_{0,j}$, and 
\eqnhn
\esp(S_{k,n}\mid\filF_{1,\infty}) 
& = & \summ j1{n}\esp\spp{\wt S_{k,0}\circ T_{0,j}\mid\filF_{1,\infty}}\nonumber\\
& = & \summ j1{n}\esp\spp{\wt S_{k,0}\circ T_{0,j}\mid T_{0,-j}(\filF_{1,\infty})}\nonumber\,,
\eqnen
where in the last equality we used the fact that $T_{0,j}(\filF_{i,\infty}) = \filF_{i,\infty}$, for all $i,j\in\mathbb Z$. Now, by the identify $\esp(f\mid\filF)\circ T = \esp(f\circ T\mid T\inv(\filF))$, we have
\equh\label{eq:Lambda1}
\esp(S_{k,n}\mid\filF_{1,\infty}) = \summ j1{n}\esp(\wt S_{k,0}\mid\filF_{1,\infty})\circ T_{0,j}\,.
\eque
Observe that~\eqref{eq:Lambda1} is again a summation in the form of~\eqref{eq:Snf1}. Then, applying the Peligrad--Utev inequality~\eqref{eq:Volny2} again, we obtain
\eqnh
\Lambda_1 & \leq & Cn^{1/2}\Big(\summ l1{n}l^{-3/2}\snn{\esp[\esp(S_{k,l}\mid\filF_{1,\infty})\mid\filF_{\infty,1}]}_p\\
& & \quad\quad\quad + \summ l1{n}l^{-3/2}\snn{\esp(S_{k,l}\mid\filF_{1,\infty})-\esp[\esp(S_{k,l}\mid\filF_{1,\infty})\mid\filF_{\infty,l}]}_p\Big)\,.
\eqne
By the commuting property of the marginal filtrations~\eqref{eq:marginalCommuting}, the above inequality becomes
\eqnhn
\Lambda_1 & \leq & Cn^{1/2}\Big(\summ l1{n}l^{-3/2}\snn{\esp(S_{k,l}\mid\filF_{1,1})}_p\nonumber\\
& & \quad\quad\quad + \summ l1{n}l^{-3/2}\snn{\esp(S_{k,l}\mid\filF_{1,\infty})-\esp(S_{k,l}\mid\filF_{1,l})}_p\Big)\,.\label{eq:12}
\eqnen
Similarly, one can show
\eqnhn
\Lambda_2 & = & \bnn{\summ j1{n}\sbb{S_{k,0}-\esp(S_{k,0}\mid\filF_{k,\infty})}\circ T_{0,j}}_p\nonumber\\
& \leq & Cn^{1/2}\Big(\summ l1{n}l^{-3/2}\snn{\esp(S_{k,l}\mid\filF_{\infty,1})-\esp(S_{k,l}\mid\filF_{k,1})}_p\nonumber\\
& & \quad\quad\quad + \summ l1{n}l^{-3/2}\| S_{k,l}-\esp(S_{k,l}\mid\filF_{k,\infty})\nonumber\\
& & \quad\quad\quad\quad\quad\quad - \esp(S_{k,l}\mid\filF_{\infty,l})+\esp(S_{k,l}\mid\filF_{k,l})\|_p\Big)\,.\label{eq:13}
\eqnen
Combining~\eqref{eq:11},~\eqref{eq:12} and~\eqref{eq:13}, we have thus proved Proposition~\ref{prop:momentInequality}.
\end{proof}
\section{Auxiliary Proofs}\label{sec:proofs}
For arbitrary $\sigma$-fields $\filF,\calG$, let $\filF\vee\calG$ denote the smallest $\sigma$-field that contains $\filF$ and $\calG$.
\begin{Prop}\label{prop:commuting}
Let $(\Omega,\calB,\proba)$ be a probability space and let $\filF,\calG,\calH$ be mutually independent sub-$\sigma$-fields of $\calB$. Then, for all random variable $X\in\calB$, $\esp |X|<\infty$, we have
\equh\label{eq:commuting}
\esp\bb{\esp(X\mid \filF\vee\calG)\mid\calG\vee\calH} = \esp(X\mid\calG)\mbox{ a.s.}
\eque
\end{Prop}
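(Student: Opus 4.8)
The plan is to reduce the two-sided identity to a single ``erasure'' statement and then prove that statement by a functional monotone-class argument after checking it on products. First I would set $Y\defe\esp(X\mid\filF\vee\calG)$, an integrable $\filF\vee\calG$-measurable random variable. Since $\calG\subseteq\filF\vee\calG$, the tower property gives $\esp(Y\mid\calG)=\esp(X\mid\calG)$, so the right-hand side of \eqref{eq:commuting} is exactly $\esp(Y\mid\calG)$. Hence it suffices to prove the reduced identity $\esp(Y\mid\calG\vee\calH)=\esp(Y\mid\calG)$ for every integrable $\filF\vee\calG$-measurable $Y$; informally, once a variable is $(\filF\vee\calG)$-measurable, throwing $\calH$ into the conditioning has no effect.

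The structural fact I would record next is that mutual independence of $\filF,\calG,\calH$ forces $\filF$ to be independent of $\calG\vee\calH$. This holds because $\{G\cap H:G\in\calG,\ H\in\calH\}$ is a $\pi$-system generating $\calG\vee\calH$, and for $F\in\filF$ mutual independence yields $\proba(F\cap G\cap H)=\proba(F)\proba(G)\proba(H)=\proba(F)\proba(G\cap H)$; Dynkin's $\pi$--$\lambda$ theorem then upgrades this to independence of $\filF$ and $\calG\vee\calH$. It is precisely here that the full three-fold hypothesis is used rather than mere pairwise independence.

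To establish the reduced identity I would first pass to bounded $Y$ by truncation, taking $L^1$-limits via the contraction property of conditional expectation, and then invoke the functional monotone class theorem: the set of bounded $\filF\vee\calG$-measurable $Y$ satisfying the identity is a vector space containing the constants and closed under bounded monotone limits, so it is enough to verify the identity on a multiplicative generating class. I would take $Y=UV$ with $U$ bounded $\filF$-measurable and $V$ bounded $\calG$-measurable. Then $V$ is measurable for both $\calG$ and $\calG\vee\calH$, hence factors out of both conditional expectations, while the independence $\filF\perp\calG\vee\calH$ (so $U\perp\calG\vee\calH$) together with $U\perp\calG$ gives $\esp(U\mid\calG\vee\calH)=\esp U=\esp(U\mid\calG)$. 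Thus $\esp(UV\mid\calG\vee\calH)=V\,\esp U=\esp(UV\mid\calG)$, which is the identity on the generating class. (Alternatively one could first replace $X$ by $\esp(X\mid\filF\vee\calG\vee\calH)$, which leaves both sides unchanged, and then reduce to products $UVW$; I prefer the $Y$-route as it avoids introducing a third factor.)

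The main obstacle I anticipate is organizational rather than computational: applying the monotone-class theorem to the correct class, namely bounded $\filF\vee\calG$-measurable functions generated by the products $UV$, and cleanly extracting $\filF\perp\calG\vee\calH$ from mutual independence. Once the identity is known for bounded $Y$, its extension to integrable $Y$ by $L^1$-approximation, and hence the return to $X$ through $\esp(Y\mid\calG)=\esp(X\mid\calG)$, is routine.
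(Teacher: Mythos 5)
Your proof is correct, and it shares the paper's overall reduction: both arguments boil the claim down to the ``erasure'' identity $\esp(Y\mid\calG\vee\calH)=\esp(Y\mid\calG)$ for integrable $\filF\vee\calG$-measurable $Y$ (the tower property then finishing via $Y=\esp(X\mid\filF\vee\calG)$), and both verify that identity on the generating class of products of an $\filF$-quantity with a $\calG$-quantity. Where you diverge is in the machinery used to pass from generators to the general case. The paper frames the erasure statement as conditional independence of $\filF\vee\calG$ and $\calG\vee\calH$ given $\calG$, and outsources both the reduction to generators and the extension to integrable variables to Theorem 7.3.1 (ii) and (iv) of Chow and Teicher; you instead prove everything by hand, with Dynkin's $\pi$--$\lambda$ theorem yielding the independence of $\filF$ from $\calG\vee\calH$, the functional monotone class theorem handling bounded $Y$, and $L^1$-truncation handling integrable $Y$. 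What the paper's route buys is brevity and a conceptual statement of independent interest (the conditional independence itself); what yours buys is self-containedness and explicitness: the key middle equality in the paper's computation, $\esp(\ind_F\mid\calG\vee\calH)=\esp(\ind_F\mid\calG)$, is asserted there without comment, and it is precisely the independence fact $\filF\perp(\calG\vee\calH)$ that you isolate and prove, correctly observing that this is the one place where joint (rather than merely pairwise) independence of the three $\sigma$-fields is used.
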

Proposition~\ref{prop:commuting} is closely related to the notion of conditional independence (see e.g.~\cite{chow78probability}, Chapter 7.3). Namely, provided a probability space $(\Omega,\filF,\proba)$, and sub-$\sigma$-fields $\calG_1,\calG_2$ and $\calG_3$ of $\filF$, $\calG_1$ and $\calG_2$ are said to be {\it conditionally independent} given $\calG_3$, if for all $A_1\in\calG_1, A_2\in\calG_2$, $\proba(A_1\cap A_2\mid\calG_3) = \proba(A_1\mid\calG_3)\proba(A_2\mid\calG_3)$ almost surely. 
\begin{proof}[Proof of Proposition~\ref{prop:commuting}]
First, we show that $\filF\vee\calG$ and $\calG\vee\calH$ are conditionally independent, given $\calG$. By Theorem 7.3.1 (ii) in  \cite{chow78probability}, it is equivalent to show, for all $F\in\filF, G\in\calG$, $\proba(F\cap G\mid\calG\vee\calH) = \proba(F\cap G\mid\calG)$ almost surely. This is true since
\[
\proba(F\cap G\mid\calG\vee\calH) = \ind_G\esp(\ind_F\mid\calG\vee\calH) = \ind_G\esp(\ind_F\mid\calG) = \proba(F\cap G\mid\calG)\mbox{ a.s.}
\]
Next, by Theorem 7.3.1 (iv) in  \cite{chow78probability}, the conditional independence obtained above yields $\esp(X\mid\calG\vee\calH)  = \esp(X\mid\calG)$ almost surely, for all $X\in\filF\vee\calG$, $\esp|X|<\infty$. Replacing $X$ by $\esp(X\mid\filF\vee\calG)$, we have thus proved~\eqref{eq:commuting}.
\end{proof}
\begin{proof}[Proof of Lemma~\ref{lem:K}]
Write $W_{k,l} = \{Z_{i,j}\}_h^{k,l}$. Define (and recall that) $W_{k,l,\pm} = \{Z_{i,j,\pm}\}_h^{k,l}$. Let $\wt W_{k,l,-}$ be a copy of $W_{k,l,-}$, independent of $W_{k,l,\pm}$. Set $\wt W_{k,l}\defe W_{k,l,+}+\wt W_{k,l,-}$. 

Recall $K_{k,l}(W_{k,l,-}) = \esp(K(W_{k,l})\mid\filF_{1,1})$ in~\eqref{eq:Kn1}.
Observe that by~\eqref{eq:Zij+-}, $W_{k,l,-}\in\filF_{1,1}$, and $W_{k,l,+}, \wt W_{k,l,-}$ are independent of $\filF_{1,1}$. Therefore, $\esp(K(\wt W_{k,l})\mid\filF_{1,1}) = \esp(K(\wt W_{k,l})) = 0$, and
\eqnh
|K_{k,l}(W_{k,l,-})| 
& = & \sabs{\esp(K(W_{k,l}) - K(\wt W_{k,l})\mid\filF_{1,1})}\\
& \leq & {\esp(|K(W_{k,l}) - K(\wt W_{k,l})|\mid\filF_{1,1})}\,.
\eqne
Observe that by~\eqref{eq:M},
\[
|K(W_{k,l}) - K(\wt W_{k,l})|\leq M_{\alpha,\beta}(\wt W_{k,l})(|W_{k,l,-}-\wt W_{k,l,-}|^\alpha + |W_{k,l,-}-\wt W_{k,l,-}|^\beta)\,.
\]
Write $U_{k,l} = W_{k,l,-}-\wt W_{k,l,-}$. By Cauchy--Schwartz's inequality, and noting that $\esp(|M_{\alpha,\beta}(\wt W_{k,l})|^2\mid\filF_{1,1}) = \snn{M_{\alpha,\beta}(\wt W_{k,l})}_2^2 = \snn{M_{\alpha,\beta}(\wt W_{1,1})}_2^2$, we have
\[
|K_{k,l}(W_{k,l,-})|\leq \snn{M_{\alpha,\beta}(\wt W_{1,1})}_2 \sccbb{\esp\sbb{(|U_{k,l}|^\alpha+|U_{k,l}|^\beta)^2\mid\filF_{1,1}}}^{1/2}\,,
\]
whence, for $p\geq 2$,
\eqnhn
\snn{K_{k,l}(W_{k,l,-})}_p & \leq & \snn{M_{\alpha,\beta}(\wt W_{1,1})}_2\snn{|U_{k,l}|^\alpha+|Y_{k,l}|^\beta}_p \nonumber\\
& \leq & \snn{M_{\alpha,\beta}(\wt W_{1,1})}_2(\snn{|U_{k,l}|^\alpha}_p+\snn{|Y_{k,l}|^\beta}_p)\,.
\label{eq:Kkl3}
\eqnen
Finally, since for all $\gamma>0$ and $n\in\mathbb N$, there exists a constant $C(\gamma,n)>0$ such that and for all vector $w=(w_1,\dots,w_n)\in\mathbb R^n$, 
\[
|w|^{2\gamma} = \bpp{\sum_{i=1}^nw_i^2}^\gamma \leq C(\gamma,n)\bpp{\sum_{i=1}^n w_i^{2\gamma}}\,,
\]
it follows that for all $\gamma>0$,
\eqnhn
\esp(|U_{k,l}|^{2\gamma}) & = & \esp(|W_{k,l,-}-\wt W_{k,l,-}|^{2\gamma}) = \esp(|\{Z_{i,j,-} - \wt Z_{i,j,-}\}_h^{k,l}|^{2\gamma})\nonumber\\
& = & O\bbb{\esp\bpp{\sum_{\substack{k-h<i\leq k\\l-h<j\leq l}}(Z_{i,j,-}-\wt Z_{i,j,-})^{2\gamma}}}\,. \nonumber
\eqnen
By  \cite{wu02central}, Lemma 4, under the notation~\eqref{eq:ZLambda}, $\esp(|\epsilon|^{2\vee2\gamma})<\infty$ implies that for all $\Lambda\subset \mathbb Z^2$, $\esp(|Z_\Lambda|^{2\gamma}) \leq CA_\Lambda^\gamma$ for some universal constant $C$. It then follows that $\esp(|U_{k,l}|^{2\gamma}) = O(A_{k+1-h,l+1-h}^\gamma)$.
Consequently,~\eqref{eq:Kkl3} yields
\eqnh
\nn{K_{k,l}(W_{k,l,-})}_p & \leq & \nn{M_{\alpha,\beta}(W_{1,1})}_2\bbb{O(A_{k+1-h,l+1-h}^{\alpha/2}) + O(A_{k+1-h,l+1-h}^{\beta/2})} \\
& = & O(A_{k+1-h,l+1-h}^{\alpha/2})\,.
\eqne
The proof is thus completed.
\end{proof}

\noindent{\bf Acknowledgment} The authors thank Stilian Stoev for many constructive and helpful discussions, and in particular his suggestion of considering the $m$-dependent approximation approach.

\def\cprime{$'$} \def\polhk#1{\setbox0=\hbox{#1}{\ooalign{\hidewidth
  \lower1.5ex\hbox{`}\hidewidth\crcr\unhbox0}}}

\end{document}